\newtheorem{ex}{Example}[section]
\newtheorem{lm}{Lemma}[section]
\newtheorem{df}{Definition}[section]
\newtheorem{tm}{Theorem}[section]
\newtheorem{co}{Corollary}[section]
\newtheorem{re}{Remark}[section]
\newcommand{\R}{{\rm I}\kern-0.18em{\rm R}}
\newcommand{\1}{{\rm 1}\kern-0.25em{\rm I}}
\newcommand{\E}{{\rm I}\kern-0.18em{\rm E}}
\newcommand{\p}{{\rm I}\kern-0.18em{\rm P}}
\def\@fnsymbol#1{\ensuremath{\ifcase#1\or a\or b\or c\or d\or \e\or f\or *\dagger 	\or \ddagger\ddagger \else\@ctrerr\fi}}
\author{Vesna Gotovac \DJ{}ogaš\footnote{Faculty of Science, University of Split}}
\date{}
\begin{document}
\title{On some topological properties of normed Boolean algebras}

\author{Vesna Gotovac \DJ{}oga\v{s}} 

\markboth{V. Gotovac \DJ{}oga\v{s}}

\maketitle

\begin{abstract} 

This paper concerns the compactness and separability properties of the normed Boolean algebras (N.B.A.) with respect to topology generated by a distance equal to the square root of a measure of symmetric difference between two elements. The motivation arises from studying random elements talking values in N.B.A.
 Those topological properties are important assumptions that enable us to avoid possible difficulties when generalising concepts of random variable convergence, the definition of conditional law and others.

For each N.B.A., there exists a finite measure space $(E, {\mathcal E}, \mu)$ such that the N.B.A. is isomorphic to $(\widetilde{\mathcal E}, \widetilde{\mu})$ resulting from the factorisation of initial $\sigma$-algebra by the ideal of negligible sets. We focus on topological properties of $(\widetilde{\mathcal E}, \widetilde{\mu})$ in general setting when $\mu$ can be an infinite measure. In case when $\mu$ is infinite, we also consider properties of  $\widetilde{\mathcal E}_{fin} \subseteq \widetilde{\mathcal E}$ consisting of classes of measurable sets having finite measure.    

The compactness and separability of the N.B.A. are characterised using the newly defined terms of approximability and uniform approximability of the corresponding measure space.

Finally,  conditions  on $(E,\mathcal E,\mu)$ are derived for separability and compactness of $\widetilde{\mathcal E}$ and $\widetilde{\mathcal E}_{fin}.$ 

\end{abstract}
{\bf keywords:  compact, locally compact, Polish space, separable}

\section{Introduction}\label{sec1} 
\setcounter{equation}{0}
The motivation for studying the topological properties of normed Boolean algebras arises from probability theory, more precisely from its subfield of stochastic geometry.
Nowadays, the mathematical theory of random sets is very popular. 
The books \cite{Ma} and \cite{Mo} provide basic definitions, notions and theoretical results on closed random sets (or compact random sets).
An approach to defining a random set that takes values in a more general family of sets than closed or compact sets is presented in \cite{Rs}. There, the random set is represented as a random element taking values in a normed Boolean algebra (N.B.A.), i.e. a complete Boolean algebra endowed with a strictly positive finite measure, see \cite{Vl}.)  These random elements are defined using Borel subsets of N.B.A. generated by a distance on N.B.A. equal to the square root of a measure of symmetric difference between two elements.


If we want to study different types of convergence of these random sets taking values in N.B.A. or generalise some other concepts related to random variables, it is beneficial to ensure that the space of its values
is a Polish space or a locally compact, Hausdorﬀ and second countable topological space (LCSH space). This motivated us to study the topological properties of the N.B.A.s with respect to topology generated by the distance equal to the square root of a measure of symmetric difference between two elements.

Let us mention some conveniences we get when working with random elements with a separable metric space of values. In this setting, for every two random elements $X$ and $X',$ a set $\{ X=X'\}$ is an event. The distance between two random elements is a random variable, which allows us to introduce convergence in probability (see \cite{War}). In this case, the space of simple random elements is a dense subspace. If the space of values of the random elements is complete and separable (Polish), then the conditional law can be defined and the Doob-Dynkin representation holds (see \cite{Polish}).

Locally compactness is also a desirable property when considering weak convergence of distributions of random elements (see \cite{Weak_convergence}).


For each complete N.B.A. $({\mathcal X},m)$ where $m$ is finite, there exists a finite measure space $(E, {\mathcal E}, \mu)$ such that the N.B.A. $({\mathcal X},m)$ and N.B.A. $(\widetilde{\mathcal E}, \widetilde{\mu})$ resulting from factorisation of initial $\sigma$-algebra by the ideal of negligible sets are isomorphic (see \cite{Vl}). Following this result we derive that the N.B.A. is homeomorphic to the space of indicator functions in $L^p(E,\mathcal E, \mu).$ 
We generalise this setting allowing $\mu$ and corresponding $m$ to obtain infinite values. In this case, the above mentioned homeomorphism does not hold. 

As we mentioned before, if $\mu$ is finite, then topological properties of N.B.A. are equivalent to topological properties of a subset of indicators in $L^2$ space.
Following results concerning the separability of $L^p$ spaces are established. If $\mu$ is $\sigma$-finite and $\mathcal E$ is countably generated, then $L^{p}(E, \mathcal{E}, \mu)$ is separable for $1 \leq p<+\infty$ (see \cite[Proposition 3.4.5.]{MT}).
Since every metric subspace of separable metric space is separable \cite[Theorem VIII, p.~160 ]{Zo} if these conditions hold the space of indicators is separable as well. 

If measure $\mu$ is not finite, then $(\widetilde{\mathcal E},\widetilde{\mu})$ is not homeomorphic to the space of indicator functions in $L^p(E,\mathcal E, \mu).$  In this case we also consider $\widetilde{\mathcal E}_{fin}=\{ [A]:\mu(A)<\infty\} \subset \widetilde{\mathcal E},$ which is homeomorphic to the space of indicator functions in $L^p(E,\mathcal E, \mu).$ 
In case $(E,\mathcal E)=(\mathbb R^d,\mathfrak{B}(\mathbb R^d))$ we  prove the $\widetilde{\mathcal E}_{fin}$  and corresponding space of indicators is a separable  if measure $\mu$ is outer regular. Although $\mathfrak{B}(\mathbb R^d)$ is countably generated, there are measures on $\mathfrak{B}(\mathbb R^d)$ which are outer regular but not $\sigma$-finite.


The compactness of subsets of $L^p$-spaces has already been well studied, and some conditions for the compactness of general bounded subsets of $L^p$-spaces can be found in \cite{Ch} and \cite[Theorems 18,20,21 pp.297]{Du}. Although these conditions can be verified for our case when $\mu$ is finite, we introduce conditions that are easier to verify, more intuitive in our setting and can be applied for verifying compactness of $\widetilde{\mathcal E}$ in case when $\mu$ is infinite.

It is well known that a separable space is a space that is "well approximated by a countable subset" and a compact space is a space that is "well approximated by a finite subset".
 We construct conditions for the corresponding measure space that follow this intuition. We call those conditions approximability and uniform approximability.  We prove that if the measure can be well approximated by its values on a countable family or a finite family of measurable sets, then the corresponding N.B.A. is separable or a compact metric space, respectively. 
 
 
 Verifying the conditions of approximability and uniform approximability, we derive a conditions and in some cases characterisation for separability and compactness of $\widetilde{\mathcal E}$ and $\widetilde{\mathcal E}_{fin}$ based on properties of corresponding measure space $(E,\mathcal E,\mu).$ 
 
 
The outline of the paper is as follows. 

In the Preliminaries section we recall basic definitions and results concerning separability and compactness, we also mention some results from the measure theory we use for deriving results. The final subsection is dedicated to the terminology concerning Boolean algebras. The metric spaces $(\widetilde{\mathcal E},d_{\mu})$ and $(\widetilde{\mathcal E}_{fin},d_{\mu})$ are introduced and their completeness is discussed.

In the Main result section, we introduce properties of approximability and uniform approximability of measure with respect to a filtration. Separability and compactness are characterised using these terms. Further, we discuss  separability and compactness of  $(\widetilde{\mathcal E}),d_{\mu})$ and $(\widetilde{\mathcal E}_{fin},d_{\mu})$ based on the properties of the corresponding measure space $(E,\mathcal E,\mu).$

The paper is concluded by the Discussion section where the obtained results are summarised.
\section{Preliminaries}\label{sec1} 
\setcounter{equation}{0}
\subsection{Topological properties}
Let us first recall definitions and the basic relation of topological properties we study. The definitions and the results we present can be found in \cite{Topo} and \cite{Wi} .

For some $A \subset X,$ let $\mathcal{A}=\left\{G_{i}\right\}$ be a class of subsets of $X$ such that $A \subset \cup_{i} G_{i}.$  $\mathcal A$ is  called a \textbf{cover} of $A$, and an \textbf{open cover} if each $G_{i}$ is open. Furthermore, if a finite subclass of $A$ is also a cover of $A$, i.e. if
$ G_{i_{1}}, \ldots, G_{i_{m}} \in A$ such that $A \subset G_{i_{1}} \cup \cdots \cup G_{i_{m}}$
then $\mathcal A$  contains a \textbf{finite subcover}.

\begin{df}
 A subset $A$ of a topological space $X$ is \textbf{compact} if every open cover of $A$ is reducible to a finite cover.
\end{df}
In other words, if $A$ is compact and $A \subset \cup_{i} G_{i}$, where the $G_{i}$ are open sets, then one can select a finite number of the open sets $G_{i_{1}}, \ldots, G_{i_{m}}$, so that $A \subset G_{i_{1}} \cup \cdots \cup G_{i_{m}}$.

If $X$ is a topological space, a neighbourhood of $x \in  X$ is a subset $V$ of $X$ that includes an open set $U$ such that $x \in U.$
\begin{df}
A topological space $X$ is \textbf{locally compact} if every point in $X$ has a compact neighbourhood.
\end{df}

\begin{df}
A subset $S$ of a metric space $X$ is called a \textbf{totally bounded} subset of $X$ if, and only if, for each $r \in \mathbb{R}^{+}$, there is a finite collection of balls of $X$ of radius $r$ that. covers $S$. A metric space $X$ is said to be totally bounded if, and only if, it is a totally bounded subset of itself.
\end{df}


\begin{tm}
\label{tm:comp_closed_totally_bounded}
  A metric space is compact if and only if it is complete and totally bounded.
\end{tm}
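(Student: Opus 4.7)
The plan is to prove the two implications separately, treating compactness through its sequential characterisation in metric spaces (every sequence has a convergent subsequence), which is equivalent to covering compactness in a metric space.

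For the forward direction (compact $\Rightarrow$ complete and totally bounded), total boundedness is immediate: for any $r>0$, the family $\{B(x,r):x\in X\}$ is an open cover of $X$, so by compactness it admits a finite subcover, giving the required finite collection of $r$-balls. For completeness, I would take a Cauchy sequence $(x_n)$ in $X$ and invoke sequential compactness to extract a convergent subsequence $x_{n_k}\to x$; then a standard triangle-inequality argument shows that the whole sequence converges to $x$, since $d(x_n,x)\le d(x_n,x_{n_k})+d(x_{n_k},x)$ and both terms can be made small by taking $n$ and $k$ large.

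For the reverse direction (complete and totally bounded $\Rightarrow$ compact), I would show sequential compactness by a nested-ball construction. Given a sequence $(x_n)$, cover $X$ by finitely many balls of radius $1/2$; by pigeonhole at least one ball $B_1$ contains $x_n$ for infinitely many indices $n$, giving a subsequence $(x_n^{(1)})$ lying in $B_1$. Inductively, cover $X$ by finitely many balls of radius $1/2^k$ and pass to a further subsequence $(x_n^{(k)})$ contained in one such ball $B_k$. The diagonal sequence $y_k=x_k^{(k)}$ is then Cauchy, because for $j,k\ge N$ the points $y_j,y_k$ both lie in a ball of diameter at most $2/2^N$. By completeness, $(y_k)$ converges, and this provides a convergent subsequence of $(x_n)$. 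I would then conclude by citing the equivalence (in metric spaces) between sequential compactness and compactness, or alternatively remark that total boundedness implies separability, which combined with sequential compactness and the Lindelöf property yields the open-cover definition.

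The main obstacle is the reverse direction, specifically the careful diagonal extraction: one must be precise about how each subsequence is nested inside the previous one so that the diagonal is both a genuine subsequence of $(x_n)$ and Cauchy. Beyond that, the argument is bookkeeping with finite covers, pigeonhole, and the triangle inequality, with no analytic subtlety.
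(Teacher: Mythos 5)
Your proof is correct and is the standard argument for this classical result; the paper itself states the theorem without proof as background material cited from its topology references, so there is no competing argument to compare against. The only point to flag is that your reverse direction ultimately rests on the equivalence of sequential and covering compactness in metric spaces, which is itself a nontrivial theorem, but you acknowledge this and correctly sketch the usual route (total boundedness gives separability, hence the Lindel\"of property, which reduces open covers to countable ones that sequential compactness can handle), so the argument is complete in outline.
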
 
\begin{tm}
\label{tm:complete_closed}
 A subspace $Y$ of a complete metric space is complete if and only if $Y$ is closed.
\end{tm}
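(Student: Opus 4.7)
The plan is to prove the equivalence by establishing each implication through a direct Cauchy sequence argument, exploiting the fact that the subspace metric on $Y$ is literally the restriction of the ambient metric on $X$.

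For the ``closed $\Rightarrow$ complete'' direction, I would start with an arbitrary Cauchy sequence $(y_n)$ in $Y$. Because the metric on $Y$ equals the restriction of the metric on $X$, the sequence is also Cauchy in $X$, so completeness of $X$ hands us a limit $x \in X$ with $y_n \to x$. Since every $y_n$ lies in $Y$ and $Y$ is closed, the limit must belong to $Y$ as well, which shows that $(y_n)$ converges in $Y$ and hence that $Y$ is complete.

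For the converse ``complete $\Rightarrow$ closed'' direction, I would take an arbitrary point $x$ in the closure of $Y$ (viewed inside $X$) and use the standard metric characterisation of closure to pick a sequence $(y_n) \subset Y$ with $y_n \to x$ in $X$. Any convergent sequence is Cauchy, so $(y_n)$ is Cauchy in $X$ and therefore Cauchy in $Y$. Completeness of $Y$ then yields some $y \in Y$ with $y_n \to y$ in $Y$, hence also in $X$. Uniqueness of limits in the metric (Hausdorff) space $X$ forces $x = y \in Y$, so $Y$ contains all of its limit points and is closed.

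I do not expect any genuine obstacle here, since this is a textbook fact; the only point that requires a moment of care is making sure to invoke the identification of the subspace metric with the restricted ambient metric at both transitions (when passing a Cauchy sequence from $Y$ to $X$ in the first direction, and when passing a convergent sequence from $X$ back to $Y$ in the second). Once that identification is made explicit, both directions reduce to a one-line Cauchy argument.
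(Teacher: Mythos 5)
Your proof is correct and is the standard argument; the paper itself states this as a known textbook fact from its references without supplying a proof, so there is nothing to diverge from. Both directions are handled properly, including the key observation that the subspace metric is the restriction of the ambient metric.
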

\begin{tm}
\label{tm:closed_subset_compact}
Every closed subset of a compact space is compact.
\end{tm}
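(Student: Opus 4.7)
The plan is to argue directly from the definition of compactness recalled in the Preliminaries, where a cover of a subset $A \subseteq X$ consists of open sets of $X$ whose union contains $A$. Given a closed subset $Y$ of a compact space $X$, I would start from an arbitrary open cover $\{G_i\}_{i \in I}$ of $Y$ by open sets of $X$, and aim to extract a finite subcollection still covering $Y$.

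The central idea is to convert a cover of $Y$ into a cover of the whole ambient space $X$ at the cost of adding a single extra open set. Since $Y$ is closed in $X$, the complement $X \setminus Y$ is open, and the enlarged family $\{G_i\}_{i \in I} \cup \{X \setminus Y\}$ covers $X$: every point either lies in $Y$, in which case some $G_i$ contains it, or lies in $X \setminus Y$. At this stage I would invoke compactness of $X$ to pull out a finite subcover $G_{i_1}, \dots, G_{i_m}$, possibly together with $X \setminus Y$. Dropping $X \setminus Y$ if it appears does not remove any point of $Y$ from the union, so the remaining sets $G_{i_1}, \dots, G_{i_m}$ form the desired finite subcover of $Y$.

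There is no substantive obstacle; the argument is a one-step reduction to compactness of $X$. The only point meriting care is the convention about what an open cover of $Y$ means, but since the paper's definition uses open sets of the ambient space, the proof above matches that convention exactly, and it is equally compatible with the subspace-topology viewpoint via the identification of open sets in $Y$ with traces $U \cap Y$ of open sets $U$ in $X$.
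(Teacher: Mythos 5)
Your argument is correct and complete: enlarging the cover of the closed set $Y$ by the open complement $X \setminus Y$, extracting a finite subcover of $X$, and discarding $X \setminus Y$ is exactly the canonical proof of this fact. The paper itself states this theorem in the Preliminaries as a recalled standard result and gives no proof, so there is nothing to compare against beyond noting that your reasoning matches the textbook argument in the cited references.
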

\begin{df}
A topological space $X$ is said to be \textbf{separable} if it contains a countable dense subset.
\end{df}
\begin{tm}
Every metric subspace of separable metric space is separable.
\end{tm}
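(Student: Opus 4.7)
The plan is to produce an explicit countable dense subset of the subspace $Y$ built out of a given countable dense subset $D=\{x_n:n\in\mathbb N\}$ of the ambient separable metric space $(X,d)$. The guiding idea is that for each ball centred at a point of $D$ with rational (or $1/k$-type) radius, if the ball meets $Y$ we can select a single witness point in $Y$, and the collection of these witnesses should then approximate every point of $Y$ from within $Y$ itself.

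Concretely, for each pair $(n,k)\in\mathbb N\times\mathbb N$ I would set $B_{n,k}=\{x\in X:d(x,x_n)<1/k\}$ and, whenever $B_{n,k}\cap Y\neq\emptyset$, pick a single point $y_{n,k}\in B_{n,k}\cap Y$ using the axiom of (countable) choice. Let $S=\{y_{n,k}:B_{n,k}\cap Y\neq\emptyset\}\subseteq Y$; clearly $S$ is countable as a subset of a set indexed by $\mathbb N\times\mathbb N$. To verify density of $S$ in $(Y,d|_Y)$, fix $y\in Y$ and $\varepsilon>0$, choose $k$ with $2/k<\varepsilon$, and use density of $D$ in $X$ to produce $x_n$ with $d(y,x_n)<1/k$. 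Then $y\in B_{n,k}\cap Y$, so the witness $y_{n,k}$ exists, and the triangle inequality gives $d(y,y_{n,k})\le d(y,x_n)+d(x_n,y_{n,k})<1/k+1/k<\varepsilon$.

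There is no serious obstacle here; the only subtlety is being careful that the witnesses $y_{n,k}$ lie in $Y$ (not merely in $X$), which is exactly what forces one to take the witness from inside $B_{n,k}\cap Y$ rather than using the original $x_n$, since $x_n$ itself need not belong to $Y$. Everything else is a direct triangle-inequality estimate.
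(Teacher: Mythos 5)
Your proof is correct: choosing a witness in $Y$ from each ball $B_{n,k}$ that meets $Y$ and closing with the triangle inequality is exactly the standard argument, and your remark about why the witness must be taken from $B_{n,k}\cap Y$ rather than being $x_n$ itself is the one genuine point of care. The paper does not prove this statement at all --- it quotes it from the literature (Zorich) --- so there is nothing to compare against beyond noting that your argument is the usual textbook one and is complete.
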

\subsection{Measure theory}
We will need following definitions and results from measure theory.
\begin{tm}[\cite{Open_disj}] 
\label{tm:pixel}
Every open subset $U$ of $\mathbb \mathbb R^d,$ $d \geq 1,$ can be written as a countable union of  disjoint half-open cubes of form  $A^{(n)}_{i_1,\ldots,i_d}=\prod_{k=1}^d\left[\frac{i_k}{2^n},\frac{i_k+1}{2^n}\right\rangle,$ $n \in \mathbb N, \ i_1,\ldots, i_d \in \mathbb Z.$
\end{tm}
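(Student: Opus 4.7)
The plan is to use the standard dyadic decomposition based on the fact that dyadic cubes form a laminar (tree) family: two cubes $A^{(n)}_{\mathbf i}$ and $A^{(m)}_{\mathbf j}$ are either disjoint, or one is contained in the other (the one at the finer level sits inside the one at the coarser level). Once this structural observation is in place, the construction is essentially greedy: at level $n=1$, select every dyadic cube $A^{(1)}_{\mathbf i}$ entirely contained in $U$; for $n\geq 2$, select every $A^{(n)}_{\mathbf i}\subset U$ that is not contained in any cube already chosen at a previous level. Call $\mathcal{C}_n$ the resulting family at level $n$ and $\mathcal{C}=\bigcup_n\mathcal{C}_n$.

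The verifications then fall into three routine steps. First, countability: each level $n$ consists of at most countably many cubes of the form $A^{(n)}_{\mathbf i}$ with $\mathbf i\in\mathbb Z^d$, so $\mathcal{C}$ is a countable union of countable sets. Second, disjointness: by the laminar property, any two cubes in $\mathcal{C}$ are either disjoint or nested; but the selection rule explicitly removes every cube strictly contained in a previously selected one, and a strictly finer cube cannot contain a coarser one, so only the disjoint case can occur. Third, covering: fix $x\in U$. Since $U$ is open, there exists $\varepsilon>0$ with the Euclidean ball $B(x,\varepsilon)\subset U$. For each $n$, let $Q_n$ denote the unique level-$n$ dyadic cube containing $x$; its diameter is $\sqrt{d}/2^n$, so for $n$ large enough $Q_n\subset B(x,\varepsilon)\subset U$. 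Let $n_0$ be the smallest such $n$. Then $Q_{n_0}\subset U$, so at the stage of the construction corresponding to level $n_0$ the cube $Q_{n_0}$ is either selected (and we are done, since $x\in Q_{n_0}\in\mathcal{C}$) or it was already contained in some previously chosen cube $Q'\in\mathcal{C}_m$ with $m<n_0$, in which case $x\in Q'$.

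No step is a real obstacle; the only mildly delicate point is phrasing the laminar/nesting property of half-open dyadic cubes so that the greedy choice really gives pairwise disjoint sets (rather than merely a cover). This comes down to the elementary fact that for $m\leq n$, the coarser cube $A^{(m)}_{\mathbf j}$ is a disjoint union of $2^{d(n-m)}$ of the finer cubes $A^{(n)}_{\mathbf i}$, which is immediate from the definition of the intervals $[i/2^n,(i+1)/2^n\rangle$. Combining the three steps above yields $U=\bigsqcup_{Q\in\mathcal{C}}Q$ as a countable disjoint union of dyadic half-open cubes of the required form, which is exactly the claim.
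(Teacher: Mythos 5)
The paper states this result with a citation to the literature and provides no proof of its own, so there is nothing internal to compare against; your argument is the standard dyadic (greedy maximal-cube) decomposition and is correct as written. All three verification steps --- countability, disjointness via the laminar property of half-open dyadic cubes, and covering via the shrinking diameter $\sqrt{d}/2^{n}$ --- are sound, so this would serve as a complete self-contained proof of Theorem~\ref{tm:pixel}.
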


\begin{df} Let $\mathcal{A}$ be a $\sigma$-algebra on $\mathbb{R}^{d}$ that includes the $\sigma$-algebra $\mathfrak{B}\left(\mathbb{R}^{d}\right)$ of Borel sets. A measure $\mu$ on $\left(\mathbb{R}^{d}, \mathcal{A}\right)$ is \textbf{regular} if
\begin{itemize}
\item[(a)] (locally finite) each compact subset $K$ of $\mathbb{R}^{d}$ satisfies $\mu(K)<+\infty$,
\item[(b)] (outer regular) each set $A$ in $\mathcal{A}$ satisfies
$$
\mu(A)=\inf \{\mu(U): U \text { is open and } A \subseteq U\} \text {, and }
$$
\item[(c)] (inner regular) each open subset $U$ of $\mathbb{R}^{d}$ satisfies
$$
\mu(U)=\sup \{\mu(K): K \text { is compact and } K \subseteq U\} .
$$
\end{itemize}
\end{df}

\begin{tm}[\cite{MT}]
\label{tm:regular}
Any finite measure on $\mathbb R^d$ is regular.
\end{tm}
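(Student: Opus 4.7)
The plan is to verify conditions (a), (b), (c) in the definition of regular measure in turn, for a finite Borel measure $\mu$ on $\mathbb R^d$. Condition (a) is immediate: for any compact $K \subseteq \mathbb R^d$ we have $\mu(K) \leq \mu(\mathbb R^d) < \infty$ by monotonicity and finiteness of $\mu$.

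For (b) and for the bulk of (c) I would run a standard good-sets argument. Set
\[
\mathcal{R} = \bigl\{ A \in \mathfrak{B}(\mathbb R^d) : \forall \epsilon > 0,\ \exists F \text{ closed},\, U \text{ open, with } F \subseteq A \subseteq U \text{ and } \mu(U \setminus F) < \epsilon \bigr\}
\]
and show that $\mathcal{R}$ is a $\sigma$-algebra containing all closed sets, so that $\mathcal{R} = \mathfrak{B}(\mathbb R^d)$. That closed sets lie in $\mathcal{R}$ follows by taking $F=C$ and the open neighbourhoods $U_n = \{x : d(x,C) < 1/n\}$, which satisfy $U_n \downarrow C$; since $\mu(U_1) < \infty$, continuity from above gives $\mu(U_n \setminus C) \to 0$. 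Closure of $\mathcal{R}$ under complementation is immediate by swapping the roles of $F$ and $U^c$. For closure under countable unions $A = \bigcup_n A_n$, pick $F_n \subseteq A_n \subseteq U_n$ with $\mu(U_n \setminus F_n) < \epsilon/2^{n+1}$ and let $U = \bigcup_n U_n$ (open) and $F' = \bigcup_n F_n$. Then $U \setminus F' \subseteq \bigcup_n (U_n \setminus F_n)$, so $\mu(U \setminus F') < \epsilon/2$; by continuity from below $\mu(F' \setminus \bigcup_{n \leq N} F_n) \to 0$, so for $N$ large enough the finite (hence closed) union $F := \bigcup_{n \leq N} F_n$ satisfies $F \subseteq A \subseteq U$ with $\mu(U \setminus F) < \epsilon$.

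Condition (b) is then immediate. It also follows that every open $U$ is approximable from inside by closed subsets, $\mu(U) = \sup\{\mu(F) : F \subseteq U \text{ closed}\}$. To upgrade closed approximations to compact ones (which is what (c) demands), exploit the $\sigma$-compactness of $\mathbb R^d$: for any closed $F \subseteq U$, the sets $K_n = F \cap \overline{B(0,n)}$ are compact, $K_n \uparrow F$, so by continuity from below $\mu(K_n) \to \mu(F)$. Hence the supremum over compact subsets of $U$ equals the supremum over closed subsets of $U$, which equals $\mu(U)$, proving (c).

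The main obstacle is the closure of $\mathcal{R}$ under countable unions. This step makes essential use of the finiteness of $\mu$ in two places: in continuity from above when approximating closed sets by open neighbourhoods, and when truncating the countable union $\bigcup_n F_n$ (not in general closed) to a finite union (which is closed) while losing only a controlled amount of measure. Everything else reduces to routine monotone-continuity of measure arguments together with the $\sigma$-compactness of $\mathbb R^d$.
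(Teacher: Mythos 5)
The paper gives no proof of this statement: it is quoted from Cohn's \emph{Measure Theory} as a known result. Your argument is correct and is essentially the standard textbook proof of that result --- a good-sets argument showing that the class of sets squeezed between a closed and an open set of almost equal measure is a $\sigma$-algebra containing the closed sets, followed by an upgrade from closed to compact inner approximation via the $\sigma$-compactness of $\mathbb R^d$ --- with the finiteness of $\mu$ used exactly where you say it is. The only caveat is that this establishes regularity on $\mathfrak{B}(\mathbb R^d)$ itself, while the paper's definition nominally allows a larger $\sigma$-algebra $\mathcal A \supseteq \mathfrak{B}(\mathbb R^d)$; since the paper only ever applies the theorem to Borel measures, this does not matter here.
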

The  Lebesgue measure on $\mathbb{R}^{d}$ is a regular  measure (see e.g. \cite{MT}). However, not all $\sigma$-finite measures on $\mathbb{R}^{d}$ are regular \cite[Corollary 13.7]{Kl}.
Also, there are some outer regular measures that are not $\sigma$-finite. For example, define $\mu:\mathfrak{B}(\mathbb R)\to [0,\infty]$ by $\mu(A)=\left\{\begin{array}{cc}
\lambda(A),& 0 \notin A,\\
\infty ,& 0 \in A.
\end{array}\right.$
It is easy to see that $\mu$ is a measure on $(\mathbb R,\mathfrak{B}(\mathbb R))$ that is not $\sigma$-finite but is outer regular.
\begin{df}
If $(E, \mathcal{E}, \mu)$ is a measure space, a set $A \in \mathcal{E}$ is called an \textbf{atom} of $\mu$ iff $0<\mu(A)<\infty$ and for every $C \subset A$ with $C \in \mathcal{E}$, either $\mu(C)=0$ or $\mu(C)=\mu(A)$.

A measure without any atoms is called \textbf{non-atomic.}

A measure space $(E, \mathcal{E}, \mu)$, or the measure $\mu$, is called \textbf{purely atomic} if there is a collection $\mathcal{C}$ of atoms of $\mu$ such that for each $A \in \mathcal{E}, \mu(A)$ is the sum of the numbers $\mu(C)$ for all $C \in \mathcal{C}$ such that $\mu(A \cap C)=\mu(C)$.
\end{df}
\begin{lm}[\cite{Chu}]
\label{lm:atom_in_R}
An atom of any finite measure $\mu$ on $(\mathbb{R}^d, \mathcal{B}(\mathbb R^d))$ is a singleton $\{x\}$ such that $\mu(\{x\})>0$.
\end{lm}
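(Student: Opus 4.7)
The plan is to zoom in on the atom using dyadic cubes: at each scale I pick the unique sub-cube that carries all of the atom's mass, and nested intersection then produces a single point.

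First I would fix an atom $A$ with $0<\mu(A)<\infty$. Applying Theorem \ref{tm:pixel} at scale $n$, the family of half-open dyadic cubes $\{A_{i_1,\ldots,i_d}^{(n)}\}$ is a countable partition of $\mathbb{R}^d$, so it induces a countable partition of $A$ whose measures sum to $\mu(A)$. By the atom property each piece $A\cap A_{\mathbf{i}}^{(n)}$ has measure either $0$ or $\mu(A)$, and since $\mu(A)>0$ exactly one index $\mathbf{i}(n)$ contributes: call the corresponding cube $Q_n$, so $\mu(A\cap Q_n)=\mu(A)$.

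Next I would establish nesting. Each dyadic cube at scale $n$ is the disjoint union of $2^d$ dyadic cubes at scale $n+1$. Applied to $Q_n$, the same atom argument forces exactly one of its $2^d$ subcubes to carry the mass, and by uniqueness at scale $n+1$ this subcube must be $Q_{n+1}$. Hence $Q_1\supseteq Q_2\supseteq\cdots$ with diameters $\sqrt{d}/2^n\to 0$.

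Finally I would extract the point via continuity of measure. Because $\mu(A\cap Q_1)\le\mu(A)<\infty$, continuity from above gives
\[
\mu\Bigl(A\cap\bigcap_{n=1}^{\infty}Q_n\Bigr)=\lim_{n\to\infty}\mu(A\cap Q_n)=\mu(A)>0,
\]
so $\bigcap_n Q_n$ is nonempty. Being contained in half-open cubes of vanishing diameter, it reduces to a single point $\{x\}$, and the displayed equation forces $\mu(\{x\})\ge\mu(A\cap\{x\})=\mu(A)>0$ together with $\mu(A\setminus\{x\})=0$, so $A$ coincides with the singleton $\{x\}$ modulo null sets, in the sense intended by the lemma.

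The only delicate point is verifying that the intersection $\bigcap_n Q_n$ is nonempty: since the cubes are half-open, purely set-theoretic reasoning can fail (e.g.\ $[1-2^{-n},1)$ has empty intersection), which is exactly why the argument is routed through continuity of measure and uses finiteness of $\mu$ in an essential way. Everything else is bookkeeping.
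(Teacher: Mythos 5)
Your proof is correct and complete. Note that the paper offers no proof of this lemma at all --- it is stated with a citation to Chung --- so there is nothing to compare against except the standard argument, which is essentially what you give: the dyadic-cube bisection. Your handling of the two delicate points is exactly right. First, the uniqueness of the mass-carrying cube at each scale uses both countable additivity over the partition and the finiteness $0<\mu(A)<\infty$ from the definition of an atom (two cubes of full mass would force $\sum \mu(A\cap Q)\ge 2\mu(A)>\mu(A)$). Second, you correctly observe that the set-theoretic intersection of nested half-open dyadic cubes can be empty (your example $[1-2^{-n},1)$ is a genuine nested dyadic sequence with empty intersection), so nonemptiness must come from continuity of measure from above, which again needs $\mu(A\cap Q_1)<\infty$; this is where finiteness of $\mu$ enters essentially, matching the hypothesis of the lemma. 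One small but worthwhile caveat you already flag: as literally stated the lemma says the atom \emph{is} a singleton, which is false in general (for $\mu=\delta_0$ the set $[0,1]$ is an atom); what your argument actually proves, and what is intended --- compare the paper's Lemma \ref{atoms_singl}, which says an atom \emph{includes} a singleton of positive measure --- is that $A$ contains a point $x$ with $\mu(\{x\})=\mu(A)>0$ and $\mu(A\setminus\{x\})=0$. Your proof would serve as a self-contained replacement for the citation.
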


\begin{lm}[\cite{Alp}]
\label{atoms_singl}
 Any atom of a Borel measure on a second countable Hausdorff space includes a singleton of positive measure.

In particular, a Borel measure on a second countable Hausdorff space is nonatomic if and only if every singleton has measure zero.
\end{lm}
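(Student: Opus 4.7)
The plan is to exploit second countability by collecting the basic open sets whose intersection with the atom $A$ is null, and showing that what remains of $A$ must be a single point.

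First I would fix a countable base $\{V_n\}_{n\in\mathbb N}$ for the topology on $X$. For each $n$ the set $V_n\cap A$ is Borel, hence measurable, and since $A$ is an atom the value $\mu(V_n\cap A)$ is either $0$ or $\mu(A)$. I would then form $N=\bigcup\{V_n:\mu(V_n\cap A)=0\}$, which is a countable union of open sets and therefore open; by countable subadditivity $\mu(N\cap A)=0$. Setting $B=A\setminus N$ gives a Borel subset of $A$ with $\mu(B)=\mu(A)>0$, so in particular $B$ is nonempty.

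The crux is to show $B$ has at most one point, and here the Hausdorff hypothesis enters. If $x\neq y$ were both in $B$, Hausdorff separation provides disjoint open neighbourhoods which, refined through the base, yield basic open sets $V_n\ni x$ and $V_m\ni y$ with $V_n\cap V_m=\emptyset$. Because $x,y\notin N$, neither $V_n$ nor $V_m$ was collected into $N$, so $\mu(V_n\cap A)=\mu(V_m\cap A)=\mu(A)$; but $V_n\cap A$ and $V_m\cap A$ are disjoint subsets of $A$, giving $2\mu(A)\le\mu(A)$, which contradicts $0<\mu(A)<\infty$. Hence $B=\{x_0\}$ for some $x_0\in A$, and since singletons are closed in a Hausdorff space they are Borel, so $\mu(\{x_0\})=\mu(B)=\mu(A)>0$, yielding the required singleton inside $A$.

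The ``in particular'' part is then a short corollary. If every singleton has measure zero, then by the first part no atom can exist, so $\mu$ is nonatomic. Conversely, a singleton $\{x\}$ with $0<\mu(\{x\})<\infty$ is itself an atom (its only measurable subsets being $\emptyset$ and $\{x\}$), which contradicts nonatomicity; the only subtle point is the possibility that $\mu(\{x\})=\infty$, but for Borel measures on a second countable Hausdorff space one can either invoke the standing finiteness on points that is usually implicit, or argue that even an infinite-mass singleton forces the existence of a finite-mass atom by intersecting with a suitable basic open cover. I expect the main obstacle to be precisely this finiteness issue in the converse direction, while the core geometric argument, the Hausdorff separation step that rules out two points surviving in $B$, is where second countability is genuinely used and is the only nontrivial calculation.
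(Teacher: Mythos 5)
Your argument for the main claim is correct and is the standard proof; the paper itself offers no proof of this lemma, simply citing it to Aliprantis--Border, so your write-up supplies what the paper leaves implicit. The core steps are all sound: second countability lets you absorb all basic open sets meeting the atom $A$ in a null set into a single open set $N$ with $\mu(N\cap A)=0$, the residual set $B=A\setminus N$ carries full measure $\mu(A)>0$, and the Hausdorff separation argument correctly forces $B$ to be a singleton, since two surviving points would produce disjoint basic neighbourhoods each meeting $A$ in full measure, contradicting $\mu(A)<\infty$ (finiteness of the atom is guaranteed by the paper's definition). The only soft spot is the one you yourself flag in the ``in particular'' direction: with the paper's definition of atom requiring $0<\mu(A)<\infty$, a singleton of \emph{infinite} mass is not an atom, so a measure concentrated on such a point is nonatomic while having a singleton of positive measure, and your suggested repair (intersecting with basic open sets) cannot shrink an infinite-mass singleton to a finite-mass atom. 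The equivalence therefore needs a local finiteness hypothesis (or restriction to finite measures) to be literally true; in the paper this is harmless, since the lemma is only invoked for purely atomic measures whose atoms are already assumed to be singletons of finite mass, but it is worth stating the hypothesis explicitly if you keep the ``if and only if'' formulation.
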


A measure space $(E, \mathcal{E}, \mu)$ is  localizable if there is a collection $\mathcal{A}$ of disjoint measurable sets of finite measure, whose union is all of $X$, such that for every set $B \subset X, B$ is measurable if and only if $B \cap C \in \mathcal{E}$ for all $C \in \mathcal{A}$, and then $\mu(B)=\sum_{C \in \mathcal{A}} \mu(B \cap C)$. 
Some examples of localisable measures are the $\sigma$-finite ones or counting measures on possibly uncountable sets.
\begin{tm}[\cite{Dud}]
\label{tm: atomic_nonatomic}Let $(E,\mathcal E,\mu)$ be a localisable measure space. Then there exists measures $\nu$ and $\rho$ such that $\mu=\nu+\rho,$ $\nu$ is purely atomic and $\rho$ is non-atomic. 
\end{tm}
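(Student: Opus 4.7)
The plan is to isolate a purely atomic part $\nu$ via a maximal disjoint family of atoms, and take $\rho$ to be the residual measure. First, apply Zorn's lemma to the poset of pairwise disjoint families of atoms of $\mu$ ordered by inclusion; chains have disjoint unions, so a maximal disjoint family $\go{A}$ of atoms exists. (If $\mu$ has no atoms, $\go{A}=\emptyset$ and we simply take $\nu=0$, $\rho=\mu$.)

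Let $\{C_j\}_{j\in J}$ be the localising partition. For each $j$, only countably many $A\in\go{A}$ can satisfy $\mu(A\cap C_j)>0$, since these intersections are pairwise disjoint, of positive measure, and contained in the finite-measure set $C_j$. Put
\[
E_a^{(j)} \;:=\; \bigcup\,\{A\cap C_j : A\in\go{A},\ \mu(A\cap C_j)>0\},
\]
a countable union of measurable sets, and set $E_a:=\bigcup_{j\in J} E_a^{(j)}$. Although $J$ may be uncountable, $E_a\cap C_j=E_a^{(j)}\in\mathcal{E}$ for every $j$, so localisability forces $E_a\in\mathcal{E}$. Each $A\in\go{A}$ then satisfies $\mu(A\setminus E_a)=0$, because $\mu(A)=\sum_j\mu(A\cap C_j)$ and the definition of $E_a^{(j)}$ omits only the $\mu$-null pieces.

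Define $\nu(B):=\mu(B\cap E_a)$ and $\rho(B):=\mu(B\setminus E_a)$; these are measures with $\mu=\nu+\rho$. Pure atomicity of $\nu$ with witness collection $\go{A}$ follows by chaining localisability, countability within each $C_j$, and Tonelli for nonnegative series:
\[
\nu(B) \;=\; \sum_{j\in J}\mu(B\cap E_a^{(j)}) \;=\; \sum_{j\in J}\sum_{A\in\go{A}}\mu(B\cap A\cap C_j) \;=\; \sum_{A\in\go{A}}\mu(B\cap A),
\]
and the dichotomy $\mu(B\cap A)\in\{0,\mu(A)\}$ for atoms yields the required identity; moreover $\mu(A\setminus E_a)=0$ shows that each $A\in\go{A}$ remains an atom of $\nu$. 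For non-atomicity of $\rho$, argue by contradiction: if $A\in\mathcal{E}$ were an atom of $\rho$, then $B:=A\setminus E_a$ has $0<\mu(B)<\infty$ and is disjoint from every element of $\go{A}$, so maximality forces $B$ not to be an atom of $\mu$; picking a measurable $B'\subseteq B$ with $0<\mu(B')<\mu(B)$ yields $B'\subseteq A$ with $0<\rho(B')<\rho(A)$, contradicting the atom property of $A$ for $\rho$.

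The principal obstacle is that $\go{A}$ can be uncountable (for instance, the counting measure on an uncountable set is localisable with uncountably many atoms), so naive countable-union technology does not suffice to define $E_a$ or to rearrange the sums in the display above. Localisability is exactly the hypothesis that rescues measurability and additivity, by letting us verify each required statement one piece $C_j$ at a time, where things become genuinely countable.
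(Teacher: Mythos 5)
The paper does not prove this statement at all: it is quoted from Dudley \cite{Dud} as a known result, so there is no in-paper argument to compare against. Judged on its own, your proof is essentially sound and is the standard route (maximal disjoint family of atoms via Zorn, localisability to recover measurability of the atomic carrier $E_a$ and to justify the uncountable rearrangements, then restriction of $\mu$ to $E_a$ and its complement). The countability of $\{A\in\go{A}:\mu(A\cap C_j)>0\}$, the identity $\mu(A\setminus E_a)=0$ for $A\in\go{A}$, the verification that each $A\in\go{A}$ is still an atom of $\nu$, and the final dichotomy argument for non-atomicity of $\rho$ all check out against the paper's definitions of atom, purely atomic, and localisable.

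One step needs patching: you order the Zorn poset by \emph{literal} pairwise disjointness, but in the non-atomicity argument the candidate new atom $B=A\setminus E_a$ is only $\mu$-almost disjoint from the members of $\go{A}$ (each intersection $B\cap A'$ is contained in the null set $A'\setminus E_a$, which need not be empty). So ``maximality forces $B$ not to be an atom'' does not follow as written, and you cannot trim $B$ to a literally disjoint set, since that would require subtracting the possibly uncountable, possibly non-measurable union $\bigcup_{A'\in\go{A}}A'$. The fix is immediate: run Zorn's lemma on families of atoms that are pairwise almost disjoint, i.e.\ $\mu(A\cap A')=0$ for distinct members. Chains still have upper bounds, the counting argument inside each $C_j$ still works (finite subfamilies still have additive unions), and now an atom $B$ with $\mu(B\cap A')=0$ for all $A'\in\go{A}$ genuinely enlarges the family, giving the contradiction you want. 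With that single change the proof is complete.
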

\subsection{Boolean algebra}
In this section, we present basics concerning Boolean algebras of sets. For more details, see e.g. \cite{Vl} or \cite{BA2}.

\begin{df}
A \textbf{Boolean algebra (B.A.)} is a structure $(\mathcal X,\cup,\cap,(\cdot)^c,0,1)$ with two binary operations $\cup$ and $\cap,$ a unary operation $(\cdot)^c$ and two distinguished elements $0$ and $1$ such that for all $\mathsf A, \mathsf B$ and $\mathsf C$ in $\mathcal X,$
\[
\begin{tabular}{ll}
$\mathsf A\cup( \mathsf B\cup \mathsf C)=(\mathsf A\cup \mathsf B)\cup \mathsf C,$ &  $\mathsf A\cap(\mathsf B\cap \mathsf C)=(\mathsf A\cap \mathsf B)\cap \mathsf C,$ \\
 $\mathsf A\cup \mathsf B= \mathsf B\cup \mathsf A,$ & $ \mathsf A\cap \mathsf B= \mathsf B \cap \mathsf A,$\\
 $ \mathsf A\cup(\mathsf A\cap \mathsf B)= \mathsf A,$ &  $\mathsf A\cap (\mathsf A\cup \mathsf B)=\mathsf A,$\\
 $ \mathsf A \cap( \mathsf B\cup \mathsf C)=(\mathsf A \cap \mathsf B)\cup(\mathsf A\cap\mathsf  C),$ & $\mathsf A\cup(\mathsf B\cap \mathsf C)=(\mathsf A\cup \mathsf B)\cap(\mathsf A\cup \mathsf C),$\\
$\mathsf A\cup(\mathsf A)^c=1,$ &  $\mathsf A\cap(\mathsf A)^c=0.$
\end{tabular}
\]
\end{df}
\begin{df}
Let $\mathcal X$ be a B.A. 
The B.A. $\mathcal X $ is  \textbf{normed  (N.B.A.)} if  there exists a $\sigma$-additive strictly 
positive finite measure $\mu$ (i.e. $\mu(\mathsf A)=0$ implies $\mathsf A=0$) defined on it. In this case, we use the notation $(\mathcal X,m )$.
\end{df}

On  $\mathcal X\times \mathcal X$ we can define a relation $\subseteq$ by setting
$\mathsf A\subseteq \mathsf B$ if $\mathsf A\cup \mathsf  B= \mathsf B.$
It is easy to verify that $\subseteq$ is a partial order relation.

\begin{df}
B.A. $\mathcal X$ is \textbf{complete} if for every non-empty subset 
$\mathcal C \subseteq \mathcal X$ has its infimum and supremum.
\end{df}

Let $(E, {\mathcal E}, \mu)$ be a finite measure space. 
We can define equivalence relation $\sim$ on $ {\mathcal E }\times \mathcal E$ by setting  $A\sim B$ if and only if $\mu(A \Delta B)=0, $ where $A \Delta  B = ( A\cap  B^c)\cup( A^c \cap B)$ is the symmetric difference between the sets $A$ and $B$ ($A^c$ and $B^c$ denote the complements of $A$ and $B$, respectively). 

Let $[A]=\{ B \in \mathcal E: \mu( A\Delta B)=0\} \in \widetilde{\mathcal E}.$ 
Then $\widetilde{\mathcal E}=\{ [A]:  A \in \mathcal{E}\}$  a quotient space of $\mathcal E$ by $\sim$ is a complete N.B.A. endowed with the measure $\widetilde{\mu}$ defined by $\widetilde{\mu}([A]):=\mu(A)$.

The inverse result also holds. Namely, for each complete N.B.A. $({\mathcal X},m)$, there exists a measure space $(E, {\mathcal E}, \mu)$ such that the N.B.A. $(\mathcal X,m)$ is isomorphic to $(\widetilde{\mathcal E}, \widetilde{\mu})$ (see \cite{Vl}).
Therefore, further on we focus on investigating properties of $(\widetilde{\mathcal E}, \widetilde{\mu}).$ We generalise above setting, by letting measure $\mu$ be an arbitrary, possibly non-finite.


Define $d_{\mu}: \widetilde{\mathcal E}\times \widetilde{\mathcal E}\to \left[0,\infty\right]$ by
$$d_{\mu}([A],[B]):=\left(\widetilde{\mu}([A]\Delta[B])\right)^{(1/2)}=\left(\mu(A\Delta B)\right)^{(1/2)}.$$
It is easy to see that $d_{\mu}$ is a metric on $\widetilde{\mathcal{E}}$ possibly taking infinite values.
We suppose the topology on $\widetilde{\mathcal{E}}$ is generated by $d_{\mu}.$ We are interested in topological properties of  $(\widetilde{\mathcal{E}},d_{\mu}).$
\begin{re}
Let us mention that there are many topologies introduced in B.A.s. The most popular among them is the order topology.
It is known that the topology of the metric space $(\widetilde{\mathcal{E}},d_{\mu})$ coincides with the ordered topology (see \cite{Vl}).
\end{re}

Denote $L^2(E,\mathcal{E},\mu)$ a Hilbert space of measurable functions which are square integrable with respect to the measure $\mu$, where functions which agree $\mu$ almost everywhere are identified.
Let $\mathcal I=\{\1_A, A \in \mathcal E\}=\{\1_A: \mu(A)<\infty \}\subset L^2(E,\mathcal{E},\mu)$ where $$\1_A(x)=\left\{ \begin{array}{cc} 0, & x \notin A,\\ 1,& x \in A, \end{array} \right.$$ stands for \textbf{indicator} of set $A$ or a \textbf{characteristic function} of set $A.$

If $\mu(E)<\infty,$ we can define $\iota :\widetilde{\mathcal E} \to \mathcal I$ by $\iota([A])=\1_A.$ Since
$$d_\mu([A],[B]) =\left(\int_{E} \left|\1_{A} - \1_{B} \right|^2 d\mu\right)^{(1/2)},$$ $\mathcal E$ and $\mathcal I$ are isometric.

Suppose that $1_{A_n}$ converges to $f$ in $L^2(E,\mathcal{E},\mu).$ Since, $L^2(E,\mathcal{E},\mu)$ is complete, $f \in L^2(E,\mathcal{E},\mu).$ Let us show that $f$ is an indicator function of some measurable set.   
There exists a subsequence $(\1_{A_{n_k}})_k$ such that $\lim\limits_k \1_{A_{n_k}}(x)=f(x), \ \mu-a.e. $ (see e.g. \cite[Theorem 16.25]{Y})
Also, following
\begin{align*}
f(x)&=\liminf_k\1_{A_{n_k}}(x)=\1_{\liminf_k A_{n_k}}(x)\leq \1_{\limsup_k A_{n_k}}(x)\\
&=\limsup \1_{A_{n_k}}(x)=f(x), \mu-a.e.,
\end{align*}
 we conclude that $\mathcal I$ is closed. 
 Following Theorem \ref{tm:complete_closed} we can conclude that $\mathcal I$ is complete metric subspace of $L^2(E,\mathcal{E},\mu).$ Therefore, we have shown that in case $\mu$ is finite  $(\widetilde{\mathcal E},d_{\mu})$ is complete metric space.
 Let us show that this holds in a general case when $\mu$ is not finite.
\begin{tm}
\label{tm:compl}
$(\widetilde{\mathcal E},d_{\mu})$ is complete metric space.
\end{tm}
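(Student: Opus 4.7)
The plan is to adapt the standard proof that $L^2(\mu)$ is complete, but phrased entirely at the set level so that the fact that $\mu$ may be infinite causes no trouble. The point to exploit is that a Cauchy sequence in $(\widetilde{\mathcal{E}},d_\mu)$ must eventually have all pairwise distances finite, even though the metric itself may take the value $+\infty$.

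Let $([A_n])_{n}$ be a Cauchy sequence in $(\widetilde{\mathcal{E}},d_\mu)$. First I would extract a fast subsequence $(A_{n_k})_k$ such that
\[
\mu(A_{n_{k+1}}\Delta A_{n_k}) < 2^{-2k}, \qquad k\in\mathbb{N}.
\]
Set $B_k=A_{n_{k+1}}\Delta A_{n_k}$. Then $\sum_k \mu(B_k)<\infty$, so by the Borel--Cantelli lemma (which requires no finiteness of $\mu$) we have $\mu(\limsup_k B_k)=0$. In other words, for $\mu$-almost every $x\in E$ the membership $x\in A_{n_k}$ changes only finitely often as $k\to\infty$, hence the sequence $(\1_{A_{n_k}}(x))_k$ is eventually constant $\mu$-a.e.

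Define the candidate limit
\[
A \;=\; \liminf_{k\to\infty} A_{n_k} \;=\; \bigcup_{k\ge 1}\bigcap_{j\ge k} A_{n_j} \;\in\; \mathcal{E}.
\]
Outside the null set $\limsup_k B_k$, $\1_{A_{n_k}}$ stabilises to $\1_A$. For the rate of convergence in $d_\mu$, I would observe that if $x\notin \bigcup_{j\ge k} B_j$, then $x\in A_{n_k}$ if and only if $x\in A_{n_j}$ for all $j\ge k$, hence $x\in A_{n_k} \Leftrightarrow x\in A$. Consequently
\[
A_{n_k}\Delta A \;\subseteq\; \bigcup_{j\ge k} B_j,
\]
so
\[
\mu(A_{n_k}\Delta A) \;\le\; \sum_{j\ge k}\mu(B_j) \;\le\; \sum_{j\ge k} 2^{-2j} \;\longrightarrow\; 0,
\]
which gives $d_\mu([A_{n_k}],[A])\to 0$.

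Finally, I would invoke the standard fact that a Cauchy sequence with a convergent subsequence is itself convergent: given $\varepsilon>0$, choose $k$ so that $d_\mu([A_{n_k}],[A])<\varepsilon/2$ and $n_k$ large enough that $d_\mu([A_n],[A_{n_k}])<\varepsilon/2$ for every $n\ge n_k$, and apply the triangle inequality. I do not foresee a serious obstacle; the only point that deserves attention is that, because $\mu$ may be infinite, one cannot treat $(\widetilde{\mathcal{E}},d_\mu)$ as a subspace of $L^2(\mu)$ the way the paragraph preceding the theorem does in the finite-measure case. The argument above bypasses this by working directly with symmetric differences and Borel--Cantelli, which remain valid for arbitrary measures.
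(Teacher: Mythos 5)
Your proof is correct, and it takes a genuinely different route from the paper's. The paper reduces the statement to the completeness of $L^{1}(E,\mathcal E,\mu)$: it fixes $n_{\epsilon}$, translates by $\1_{A_{n_{\epsilon}}}$ so that the functions $f_{n}=\1_{A_{n}}-\1_{A_{n_{\epsilon}}}$ become integrable even when $\mu(E)=\infty$, invokes $L^{1}$-completeness to get a limit $f$, passes to an a.e.\ convergent subsequence, and then identifies the limit class as $[\liminf_{k}A_{n_{k}}]$. You bypass $L^{1}$ entirely: you extract a fast subsequence with $\mu(A_{n_{k+1}}\Delta A_{n_{k}})<2^{-2k}$, apply the first Borel--Cantelli lemma (valid for arbitrary measures) to the sets $B_{k}=A_{n_{k+1}}\Delta A_{n_{k}}$, and obtain the quantitative inclusion $A_{n_{k}}\Delta A\subseteq\bigcup_{j\ge k}B_{j}$ with $A=\liminf_{k}A_{n_{k}}$, which gives an explicit rate $\mu(A_{n_{k}}\Delta A)\le\sum_{j\ge k}2^{-2j}$; the standard ``Cauchy with convergent subsequence'' argument finishes. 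Both proofs produce the same limit $[\liminf_{k}A_{n_{k}}]$, but yours is self-contained and purely set-theoretic (it essentially inlines the fast-subsequence argument that underlies the Riesz--Fischer theorem), and it avoids the translation trick the paper needs to make sense of $\1_{A_{n}}$ as an element of $L^{1}$ when $\mu$ is infinite. The paper's version is shorter if one is willing to cite $L^{1}$-completeness, though as written it conflates $d_{\mu}$ with $\mu(\cdot\,\Delta\,\cdot)$ in a couple of places; your version is cleaner on exactly those points.
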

\begin{proof}
Suppose that $([A_n])_{n \in \mathbb N}$ is a Cauchy sequence in $\widetilde{\mathcal E}.$ Then for fixed $\epsilon >0$ there exists $n_{\epsilon} \in \mathbb N$ such that for $n,m\geq n_{\epsilon}$ $d_{\mu}([A_n],[A_m])<\epsilon.$ We define
$f_{n}= \1_{A_n}-\1_{A_{n_\epsilon}}, n\geq n_{\epsilon}.$ Since $\int |f_n| d\mu=\mu(A_n \Delta A_{n_{\epsilon}})=d_{\mu}([A_n]\Delta [A_{n \epsilon}])\leq \epsilon, $ so $f_n \in L^p(E,\mathcal E,\mu).$ Also, $(f_n)$ is a Cauchy sequence, and since $L^1(E,\mathcal E,\mu)$ is complete, there exists $f \in L^1$ such that $\lim\limits_{n}\int_E|f_n-f|^pd\mu=0.$ Furthermore, there exists a subsequence $(f_{n_k})_k$ such that  $\lim_k f_n{_k}(x)=f(x), \ \mu-a.e.$  
It holds
\begin{align*}
f(x)&=\liminf_k (\1_{A_{n_k}}(x)-\1_{A_{n_{\epsilon}}})=\1_{\liminf_k A_{n_k}}(x) -\1_{A_{n_{\epsilon}}}\leq \1_{\limsup_k A_{n_k}}(x)-\1_{A_{n_{\epsilon}}}\\
&=\limsup_k (\1_{A_{n_k}}(x)-\1_{A_{n_{\epsilon}}})=f(x), \mu-a.e.,
\end{align*}
which shows that $\1_{{\liminf_k A_{n_k}}}=\1_{\limsup_k A_{n_k}} \mu-a.e,$ or equivalently $[{\liminf_k A_{n_k}}]=[{\limsup_k A_{n_k}}].$ Following
$d_{\mu}([A_n],[{\liminf_k A_{n_k}}])=\int_E|\1_{A_n}-\1_{{\liminf_k A_{n_k}}}|d\mu=\int_E|\1_{A_n}-\1_{A_{n_{\epsilon}}}+\1_{A_{n_{\epsilon}}}-\1_{{\liminf_k A_{n_k}}}|d\mu=\int_E|f_n-f|d\mu=0,$ the sequence $([A_n])_n$ is convergent, so $(\widetilde{\mathcal E},d_{\mu})$ is complete. 
\end{proof}
\begin{re}
In case $\mu(E)=\infty,$ one can also consider $\widetilde{\mathcal E}_{fin}=\{ [A]: A\in \mathcal E, \mu(A)<\infty\}.$ It is easy to see that $(\widetilde{\mathcal E}_{fin},d_{\mu})$ is isometric to $\mathcal I \subset L^2(E,\mathcal E,\mu),$ so it is complete metric subset of $(\widetilde{\mathcal E},d_{\mu}).$ However, $\widetilde{\mathcal E}_{fin}$ is not a B.A. since it is not e.g. closed under complements.
\end{re}
\section{Main result}\label{sec1} 
\setcounter{equation}{0}
Before we show the main result, in order to get intuition,  we first start with a motivating example.

Suppose that $K=[0,1]\times [0,1]$ (an observation window) and consider $(E,\mathcal E,\mu)=(K,\mathfrak{B}(K),\lambda|_{\mathfrak{B}(K)}), $ where $\mathfrak{B}(K)$ is Borel $\sigma$-algebra on $K$ and $\lambda|_{\mathfrak{B}(K)}$ Lebesgue measure.

If we consider a ball in $(\widetilde{\mathcal E},d_{\mu})=(\widetilde{\mathfrak B(K)},d_{\lambda})$ of radius $\epsilon>0$ it holds:
\begin{align*}
    B([A],\epsilon)&=\{[B] \in \widetilde{\mathfrak B(K)}: d_{\lambda}([A],[B])<\epsilon\}\\
    &=\{ [B] \in \widetilde{\mathfrak{B}(K)},  \lambda(A\Delta B)<\epsilon^2\}.
\end{align*}

For each $ n \in \mathbb N,$ we can partition $K$ into $2^{2n}$ smaller squares $A^{(n)}_{(i,j)}=[(i-1)/2^n,i/2^n]\times[(j-1)/2^n,j/2^n], \ i,j=1,\ldots,2^n.$ Intuitively, we pixelise the unit square by a $2^n\times 2^n$ net.

We show that for each $\epsilon$ we can pixelise the unit square fine enough so that the error of approximation of the set $B$ would be less than $\epsilon.$
Denote by $I_n=\{1,2,\ldots,2^n\}\times\{1,2,\ldots,2^n\}.$
\begin{lm} \label{lm:approx} For an arbitrary $B \in \mathfrak{B}(K)$ and an arbitrary $\epsilon >0,$ there exists $n \in \mathbb N$ and $I \subset I_n $ such that
\[ \lambda\left(\bigcup\limits_{(i,j)\in I}A^{(n)}_{(i,j)}\Delta B\right)<\epsilon.
\]
\end{lm}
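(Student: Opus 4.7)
The plan is to combine outer regularity of Lebesgue measure on the unit square with the dyadic decomposition of open sets from Theorem \ref{tm:pixel}, and then to refine the decomposition to a single common level.

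First, since $\lambda|_{\mathfrak{B}(K)}$ is a finite measure on the Borel sets of $\mathbb{R}^2$, Theorem \ref{tm:regular} gives outer regularity. So I would fix $\epsilon > 0$, set $\eta = \epsilon/3$, and pick an open set $U \subseteq \mathbb{R}^2$ with $B \subseteq U$ and $\lambda(U \setminus B) < \eta$. By intersecting with the interior of $K$ (and absorbing the measure-zero boundary $\partial K$) I can assume $U \subseteq K$.

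Next, I apply Theorem \ref{tm:pixel} to write $U$ as a countable disjoint union $U = \bigsqcup_{k=1}^{\infty} C_k$ of half-open dyadic cubes, each $C_k$ being of the form $\prod_{r=1}^{2}[i_r^{(k)}/2^{m_k},(i_r^{(k)}+1)/2^{m_k})$ at some level $m_k$. Because $\lambda(U) \le 1 + \eta < \infty$, I can truncate: choose $N$ so large that $\lambda\bigl(\bigcup_{k>N} C_k\bigr) < \eta$, and set $V = \bigsqcup_{k=1}^{N} C_k$. Then $\lambda(U \setminus V) < \eta$. Letting $n = \max_{1\le k\le N} m_k$, each $C_k$ is a finite disjoint union of level-$n$ dyadic cubes, so $V$ itself is a finite union of level-$n$ dyadic cubes. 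Translating indices (and noting that the boundary differences between half-open cubes of Theorem \ref{tm:pixel} and the closed cubes $A^{(n)}_{(i,j)}$ in the lemma have Lebesgue measure zero), one gets $I \subseteq I_n$ with $V = \bigcup_{(i,j) \in I} A^{(n)}_{(i,j)}$ up to a null set.

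To conclude, I estimate the symmetric difference: since $B \subseteq U$ one has $V \setminus B \subseteq U \setminus B$ and $B \setminus V \subseteq U \setminus V$, so
\[
\lambda(V \Delta B) \le \lambda(U \setminus B) + \lambda(U \setminus V) < 2\eta < \epsilon,
\]
which is exactly the claim with $V = \bigcup_{(i,j)\in I} A^{(n)}_{(i,j)}$.

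There is no real obstacle here; the only point to be careful about is the bookkeeping between the half-open dyadic cubes supplied by Theorem \ref{tm:pixel} and the closed cubes $A^{(n)}_{(i,j)}$ in the statement, together with ensuring that after truncating and refining to a common level $n$ the finitely many cubes are genuinely indexed by a subset of $I_n = \{1,\ldots,2^n\}^2$. Both issues reduce to discarding a set of Lebesgue measure zero, so they do not affect the final estimate.
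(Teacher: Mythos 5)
Your argument is correct and is essentially the paper's own: the paper derives this lemma as a special case of the general Lemma \ref{lm:aprox}, whose proof uses exactly your chain of outer regularity (via Theorem \ref{tm:regular}), the dyadic decomposition of the open set from Theorem \ref{tm:pixel}, truncation of the countable union using finiteness of the measure, and refinement to a common dyadic level before applying the triangle-inequality estimate on the symmetric difference. Your specialisation to $K=[0,1]^2$ simply lets you skip the preliminary truncation to $[-n,n\rangle^d$ that the general lemma needs.
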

This result follows directly from Lemma \ref{lm:aprox} which we prove later in the paper.

Note that the family  $\{[A^{(n)}_{(i,j)}]: i,j,n\in \mathbb N, i,j\leq 2^n\}$ is countable dense subset of $(\widetilde{\mathfrak B(K)},d_{\lambda}),$ so $(\widetilde{\mathfrak B(K)},d_{\lambda})$ is separable.

Following Lemma \ref{lm:approx}, for arbitrary $\epsilon>0$ the collection of balls 
\begin{equation}
\label{eq:cover}
\mathcal C_{\epsilon}=\{ B([A],\epsilon): \text{ $ \exists n \in \mathbb N$ such
that $A=\bigcup\limits_{(i,j)\in I}A^{(n)}_{(i,j)}$ for some $I \subset I_n.$}\}
\end{equation}
is an infinite (countable) open cover of $(\widetilde{\mathfrak B(K)},d_{\lambda}).$ (Since for every $\epsilon>0$ and arbitrary $B \in \mathfrak B(K)$ there exists $A$ in form $A=\bigcup\limits_{(i,j)\in I}A^{(n)}_{(i,j)}$ for some $I \subset I_n$ such that $[B] \in B([A],\epsilon)$ )

Suppose that $(\widetilde{\mathfrak B(K)},d_{\lambda})$ is compact, therefore the open cover (\ref{eq:cover}) should have a finite subcover. It means that there exists $m \in \mathbb N$ such that the collection of open balls 
\[
\mathcal C_{\epsilon,fin}^{m} = \{ B([A],\epsilon): \text{ $A=\bigcup\limits_{(i,j)\in I}A^{(m)}_{(i,j)}$ for some $I \subset I_m.$}\}
\]
covers $(\widetilde{\mathfrak B(K)},d_{\lambda}).$

However, if we take $\epsilon =\frac{1}{\sqrt{2}}$ and define a set \begin{equation}
\label{eq:Tm}
    T_m=\bigcup\limits_{i,j=1}^{m}T_{(i,j)},
\end{equation}  where $T_{(i,j)}=\{ (x,y)\in K: x\in[(i-1)/2^m,i/2^m],y\in [(j-1)/2^m,j/2^m], y\leq x -i/2^m+j/2^m \}$ it is easy to see (Left plot in Figure \ref{fig1} provides a visualisation of the set $T_2$)
\begin{figure}[H]
    \centering
\begin{tikzpicture}
\filldraw[color=gray!45, fill=gray!45, very thick] (0,0) -- (1,0) -- (1,1) -- cycle;
\filldraw[color=gray!45, fill=gray!45, very thick] (1,0) -- (2,0) -- (2,1) -- cycle;
\filldraw[color=gray!45, fill=gray!45, very thick] (2,0) -- (3,0) -- (3,1) -- cycle;
\filldraw[color=gray!45, fill=gray!45, very thick] (3,0) -- (4,0) -- (4,1) -- cycle;
\filldraw[color=gray!45, fill=gray!45, very thick] (0,1) -- (1,1) -- (1,2) -- cycle;
\filldraw[color=gray!45, fill=gray!45, very thick] (1,1) -- (2,1) -- (2,2) -- cycle;
\filldraw[color=gray!45, fill=gray!45, very thick] (2,1) -- (3,1) -- (3,2) -- cycle;
\filldraw[color=gray!45, fill=gray!45, very thick] (3,1) -- (4,1) -- (4,2) -- cycle;
\filldraw[color=gray!45, fill=gray!45, very thick] (0,2) -- (1,2) -- (1,3) -- cycle;
\filldraw[color=gray!45, fill=gray!45, very thick] (1,2) -- (2,2) -- (2,3) -- cycle;
\filldraw[color=gray!45, fill=gray!45, very thick] (2,2) -- (3,2) -- (3,3) -- cycle;
\filldraw[color=gray!45, fill=gray!45, very thick] (3,2) -- (4,2) -- (4,3) -- cycle;
\filldraw[color=gray!45, fill=gray!45, very thick] (0,3) -- (1,3) -- (1,4) -- cycle;
\filldraw[color=gray!45, fill=gray!45, very thick] (1,3) -- (2,3) -- (2,4) -- cycle;
\filldraw[color=gray!45, fill=gray!45, very thick] (2,3) -- (3,3) -- (3,4) -- cycle;
\filldraw[color=gray!45, fill=gray!45, very thick] (3,3) -- (4,3) -- (4,4) -- cycle;
\draw[black, very thick] (0,0) rectangle (4,4);
\end{tikzpicture}
\hspace{1 cm}
  \begin{tikzpicture}
\filldraw[color=gray!95, fill=gray!95, very thick] (0.5,0) -- (1,0) -- (1,0.5) -- cycle;
\filldraw[color=gray!95, fill=gray!95, very thick] (1.5,0) -- (2,0) -- (2,0.5) -- cycle;
\filldraw[color=gray!95, fill=gray!95, very thick] (2.5,0) -- (3,0) -- (3,0.5) -- cycle;
\filldraw[color=gray!95, fill=gray!95, very thick] (3.5,0) -- (4,0) -- (4,0.5) -- cycle;
\filldraw[color=gray!95, fill=gray!95, very thick] (0.5,1) -- (1,1) -- (1,1.5) -- cycle;
\filldraw[color=gray!95, fill=gray!95, very thick] (1.5,1) -- (2,1) -- (2,1.5) -- cycle;
\filldraw[color=gray!95, fill=gray!95, very thick] (2.5,1) -- (3,1) -- (3,1.5) -- cycle;
\filldraw[color=gray!95, fill=gray!95, very thick] (3.5,1) -- (4,1) -- (4,1.5) -- cycle;
\filldraw[color=gray!95, fill=gray!95, very thick] (0.5,2) -- (1,2) -- (1,2.5) -- cycle;
\filldraw[color=gray!95, fill=gray!95, very thick] (1.5,2) -- (2,2) -- (2,2.5) -- cycle;
\filldraw[color=gray!95, fill=gray!95, very thick] (2.5,2) -- (3,2) -- (3,2.5) -- cycle;
\filldraw[color=gray!95, fill=gray!95, very thick] (3.5,2) -- (4,2) -- (4,2.5) -- cycle;
\filldraw[color=gray!95, fill=gray!95, very thick] (0.5,3) -- (1,3) -- (1,3.5) -- cycle;
\filldraw[color=gray!95, fill=gray!95, very thick] (1.5,3) -- (2,3) -- (2,3.5) -- cycle;
\filldraw[color=gray!95, fill=gray!95, very thick] (2.5,3) -- (3,3) -- (3,3.5) -- cycle;
\filldraw[color=gray!95, fill=gray!95, very thick] (3.5,3) -- (4,3) -- (4,3.5) -- cycle;
\draw[black, very thick] (0,0) rectangle (4,4);
\draw[black, very thick] (0,1) -- (4,1);
\draw[black, very thick] (0,2) -- (4,2);
\draw[black, very thick] (0,3) -- (4,3);
\draw[black, very thick] (1,0) -- (1,4);
\draw[black, very thick] (2,0) -- (2,4);
\draw[black, very thick] (3,0) -- (3,4);
\end{tikzpicture}
    \caption{Left plot: Visualisation of set $T_{2}^{\frac{1}{2}}$ (coloured in grey) defined by (\ref{eq:Tm}). Right plot: Visualisation of the set $T_2^{\epsilon}$ for $\epsilon=\frac{1}{8}$ (coloured in dark grey) defined by (\ref{eq:Tm})}
    \label{fig1}
\end{figure}
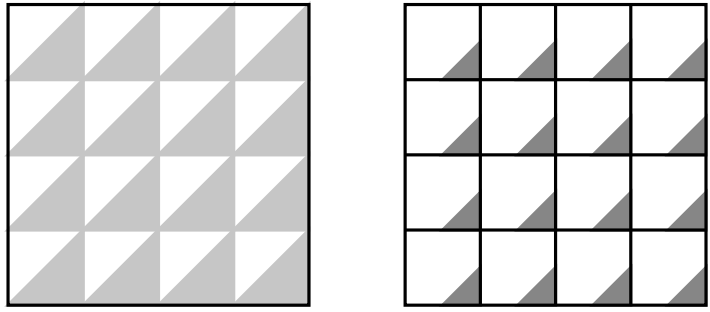

that $\lambda(A\Delta T_m)=1/2,$ for each $A$  such
that $A=\bigcup\limits_{(i,j)\in I}A^{(n)}_{(i,j)}$ for some $I \subset I_n, n\leq m.$  
Therefore $[T_m]$ is not contained in any ball in $\mathcal C_{\frac{1}{\sqrt{2}},fin}^{m}$ so  $\mathcal C_{\frac{1}{\sqrt{2}},fin}^{m}$ cannot be a cover of $(\widetilde{\mathfrak B(K)},d_{\lambda}).$ So countable open cover $\mathcal{C}_{\frac{1}{\sqrt{2}}}$ of $(\widetilde{\mathfrak B(K)},d_{\lambda})$ has no finite subcover.  
We can conclude that $(\widetilde{\mathfrak B(K)},d_{\lambda})$ is not compact.

In order to show that $(\widetilde{\mathfrak B(K)},d_{\lambda})$ is not locally compact, we will prove that  closed ball with the centre in $[\emptyset]$ and radius $\epsilon<\frac{1}{\sqrt{2}}$ denoted by $  \overline{B}([A],\epsilon)=\{ [B] \in \widetilde{\mathfrak B(K)} : \lambda(A\Delta B)\leq \epsilon^2\}$ is not compact.

Since $\mathcal C_{\epsilon}$ covers $\widetilde{\mathfrak B(K)}$ it also covers $\overline{B}\left([\emptyset],\epsilon\right)$ 
We will show that open cover $\mathcal C_{\epsilon}$ cannot be reduced to a finite subcover.
For that purpose, for $0<\epsilon^2\leq\frac{1}{2}$ we define a set \begin{equation}
\label{eq:Tm}
    T_{m}^{(\epsilon)}=\bigcup\limits_{(i,j)\in I_m}T_{(i,j)}^{(\epsilon)},
\end{equation}  where $T_{(i,j)}^{(\epsilon)}=\{ (x,y)\in K: x\in[(i-1)/2^m+(1-\epsilon^2/2)/2^m,i/2^m],y\in [(j-1)/2^m,j/2^m], y\leq x -i/2^m+j/2^m \}$ 
of $2^{2m}$ disjoint triangles whose union has Lebesgue measure equal to $\epsilon^2,$ so $[{T_m}] \in \overline{B}\left([\emptyset],\epsilon\right)$ for each $m \in \mathbb N.$
Right plot in Figure \ref{fig1} provides a visualisation of the set $T_2^{\epsilon}$ for $\epsilon=\frac{1}{8}.$
For each $A$ such
that $A=\bigcup\limits_{(i,j)\in I}A^{(m)}_{(i,j)}$ for some $I \subset I_m,$ it holds 
\begin{align}
\label{eq:Tm_epsilon_diff}
\mu(A\Delta T_m^{(\epsilon)})&= \mu(A\diagdown T^{(\epsilon)}_m)+\mu(T^{(\epsilon)}_m\diagdown A)\\ \nonumber
&= \sum\limits_{(i,j)\in I}\mu(A_{(i,j)}\diagdown T^{(\epsilon)}_{m})+\sum\limits_{(i,j)\in I_m\diagdown I}\mu( T^{(\epsilon)}_{m}\diagdown A_{(i,j)})\\ \nonumber
&=(1-\frac{\epsilon^2}{2^{2m}})|I|+\frac{\epsilon^2}{2^{2m}}(|I_m|-|I|)\\
&\geq \epsilon^2, \nonumber
\end{align} 
where the last inequality follows from the fact that from $0< \epsilon^2 \leq \frac{1}{2}$ follows $1-\epsilon^2>\epsilon^2.$  
So $[T_m^{(\epsilon)}] \in \overline{B}\left([\emptyset],\epsilon\right)$ but it is not in any ball in $\mathcal C_{\epsilon,fin}^{m}.$ We conclude   $\mathcal C_{\epsilon,fin}^{m}$ cannot be a cover of $\overline{B}\left([\emptyset],\epsilon\right).$  Since countable open cover $\mathcal{C}_{\epsilon}$ of $\overline{B}\left([\emptyset],\epsilon\right)$ has no finite subcover, $\overline{B}\left([\emptyset],\epsilon\right)$ is not compact. 
We conclude that $[\emptyset]$ does not have a compact neighbourhood, so $\widetilde{\mathfrak B(K)}$ is not locally compact.

In order to generalise these ideas on arbitrary  measure space  $(E,\mathcal E,\mu)$ we introduce following definitions.

\begin{df}
Let $(E,\mathcal E,\mu)$ be a measure space and $\mathcal F=(\mathcal F_n)_{n \in \mathbb N}$ a filtration, i.e. $\mathcal F_n \subseteq  \mathcal E $ is $\sigma$-algebra such that $\mathcal F_n \subseteq \mathcal F_{n+1}.$ Let $\mathcal E'$ be an arbitrary subset of $\mathcal E.$

Measure $\mu$ is \textbf{approximable} on $\mathcal E'$ with respect to $\mathcal F$ if for each  $A \in \mathcal E'$ and each $\epsilon>0$ there exists $n_{\epsilon}\in \mathbb N$ and $A'\in \mathcal{F}_{n_{\epsilon}}$ such that
\[
\mu(A\Delta A')<\epsilon.
\]

Measure $\mu$ is \textbf{uniformly approximable} on $\mathcal E'$ with respect to $\mathcal F$ if for each $\epsilon>0$ there exists $n_{\epsilon}\in \mathbb N$ such that for every $A \in \mathcal{E'}$ there exists $A'\in \mathcal{F}_{n_{\epsilon}}$ so that
\[
\mu(A\Delta A')<\epsilon.
\]

\end{df}

For arbitrary $\mathcal E' \subseteq \mathcal E$ denote by $\widetilde{\mathcal E'}=\{ [A]: A\in \mathcal E'\}.$
\begin{tm} \label{tm:sep} $\widetilde{\mathcal E'}$ is separable in $(\widetilde{\mathcal E},d_\mu)$ if and only if there exist $\mathcal F=(\mathcal F_n)_{n \in \mathbb N}$ a filtration for which $|\mathcal F_n|$ is finite for each $n \in \mathbb N$ such that $\mu$ is approximable on $\mathcal E'$ with respect to $\mathcal F$.
\end{tm}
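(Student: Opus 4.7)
The plan is to prove the two directions essentially by unwinding the definitions, with the only nontrivial ingredient being that a $\sigma$-algebra generated by finitely many sets is finite.

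For the sufficiency ($\Leftarrow$), I would fix a filtration $\mathcal F=(\mathcal F_n)_{n\in\mathbb N}$ with each $|\mathcal F_n|<\infty$ on which $\mu$ is approximable on $\mathcal E'$, and set $\mathcal D:=\{[F]:F\in\bigcup_{n\in\mathbb N}\mathcal F_n\}$. Since a countable union of finite sets is countable, $\mathcal D\subseteq\widetilde{\mathcal E}$ is countable. For any $[A]\in\widetilde{\mathcal E'}$ and any $\epsilon>0$, I would apply approximability at level $\epsilon^{2}$ to obtain some $n\in\mathbb N$ and $F\in\mathcal F_{n}$ with $\mu(A\Delta F)<\epsilon^{2}$, hence $d_{\mu}([A],[F])<\epsilon$. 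Thus $\widetilde{\mathcal E'}\subseteq\overline{\mathcal D}$, and since $\overline{\mathcal D}$ is a separable metric space, the already-cited fact that every metric subspace of a separable metric space is separable yields that $\widetilde{\mathcal E'}$ is separable.

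For the necessity ($\Rightarrow$), I would start from a countable dense subset $\{[B_k]\}_{k\in\mathbb N}\subseteq\widetilde{\mathcal E'}$, choose representatives $B_k\in\mathcal E'$, and define $\mathcal F_n:=\sigma(B_1,\ldots,B_n)$. Each $\mathcal F_n$ is finite because a $\sigma$-algebra generated by $n$ sets has atoms among the at most $2^{n}$ intersections of $B_i$ or $B_i^{c}$, giving at most $2^{2^{n}}$ elements; and $\mathcal F_n\subseteq\mathcal F_{n+1}$ by construction, so $(\mathcal F_n)_n$ is a filtration of the required form. To verify approximability, given $A\in\mathcal E'$ and $\epsilon>0$, density supplies $k\in\mathbb N$ with $d_{\mu}([A],[B_k])<\sqrt{\epsilon}$, i.e.\ $\mu(A\Delta B_k)<\epsilon$, and $B_k\in\mathcal F_k$ serves as the required $A'$ with $n_{\epsilon}:=k$.

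Neither direction presents a genuine obstacle; the main issue to keep track of is the square root in the definition of $d_{\mu}$, which forces one to calibrate the measure-level tolerance to $\epsilon^{2}$ in the sufficiency proof and $\sqrt{\epsilon}$ in the necessity proof. The only structural fact needed beyond manipulation of the definitions is the finiteness of the $\sigma$-algebra generated by finitely many sets, which is standard and readily cited.
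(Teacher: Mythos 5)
Your proposal is correct and follows essentially the same route as the paper: for sufficiency you take $\bigcup_n\mathcal F_n$ as the countable approximating family, and for necessity you generate the filtration $\mathcal F_n=\sigma(B_1,\ldots,B_n)$ from a countable dense subset, with the same $\epsilon^2$ versus $\sqrt{\epsilon}$ calibration. Your detour through $\overline{\mathcal D}$ and the subspace-separability theorem is in fact a slightly more careful handling of the point that elements of $\mathcal F_n$ need not lie in $\mathcal E'$, which the paper glosses over.
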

\begin{proof}
Suppose that $\mu$ is approximable on $\mathcal E'$ wih respect to $\mathcal F.$ Denote by $\widetilde{\mathcal E'}_c=\{ [B]: B \in \cup_{n \in \mathbb N }\mathcal F_n\}.$ Since  $\cup_{n \in \mathbb N}\mathcal F_n$ is countable, $\widetilde{\mathcal E'}_c$ is also countable. Let us show that $\widetilde{\mathcal E'}_c$ is dense in $\widetilde{\mathcal E'}$. For arbitrary $\epsilon >0$ and arbitrary $A \in \mathcal E'$ there exists $A' \in \cup_{n \in \mathbb N}\mathcal F_n$ such that $\mu(A\Delta A')<\epsilon^2,$ so $[A] \in B([A'],\epsilon),$ and therefore $\widetilde{\mathcal E'}_c$ is countable dense subset of $\widetilde{\mathcal E'}$ and $\widetilde{\mathcal E'}$ is separable.

If $\widetilde{\mathcal E'}$ is separable, then there exists a countable dense subset of $\widetilde{\mathcal E'},$ denote it by $\widetilde{\mathcal E'}_c.$ W can represent $\widetilde{\mathcal E'}_c$
as $\{ [B]: B\in \mathcal B\}$ for some countable $\mathcal B=\{ B_n: n \in \mathbb N\} \subset \mathcal E'.$  Take $\mathcal F_n=\sigma (B_1\ldots B_n).$ Then for each $\epsilon>0$ and each $A \in \mathcal E'$ there exists $B_n \in \mathcal B$ such that $[A] \in B([B_n],\epsilon)$ so that $\mu(A\Delta B_n)<\epsilon^2.$
Therefore, $\mu$ is aproximable on $\mathcal E'$ with respect to $( F_n)_{n\in \mathbb N}.$
\end{proof}
\begin{tm} \label{tm:comp} $\widetilde{\mathcal E'}$  is totally bounded in $(\widetilde{\mathcal E},d_\mu)$ if and only if there exist exists a filtration $\mathcal F=(\mathcal F_n)_{n \in \mathbb N}$  such that $\mu$ is uniformly approximable on $\mathcal E'$ with respect to $\mathcal F$ and $|\mathcal F_n|$ is finite for each $n \in \mathbb N.$
\end{tm}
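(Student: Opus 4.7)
The plan is to mirror the structure of the proof of Theorem \ref{tm:sep}, strengthening both directions from ``pointwise'' (each $A$ gets its own approximating $\sigma$-algebra index) to ``uniform'' (a common index works for all $A$ at a given scale).

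For the implication ($\Leftarrow$), suppose $\mu$ is uniformly approximable on $\mathcal E'$ with respect to $\mathcal F = (\mathcal F_n)_n$ with each $\mathcal F_n$ finite. Given $\epsilon>0$, I would apply uniform approximability at threshold $\epsilon^2$ to obtain $n_\epsilon$ such that every $A \in \mathcal E'$ admits some $A' \in \mathcal F_{n_\epsilon}$ with $\mu(A \Delta A') < \epsilon^2$, i.e. $d_\mu([A],[A']) < \epsilon$. Then
\[
\bigl\{ B([A'],\epsilon) : A' \in \mathcal F_{n_\epsilon}\bigr\}
\]
is a finite open cover of $\widetilde{\mathcal E'}$, so $\widetilde{\mathcal E'}$ is totally bounded.

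For the implication ($\Rightarrow$), suppose $\widetilde{\mathcal E'}$ is totally bounded. For each $n \in \mathbb N$, by total boundedness there is a finite $\tfrac{1}{2n}$-cover of $\widetilde{\mathcal E'}$; by the standard trick (replacing each ball-center that meets $\widetilde{\mathcal E'}$ by a point of that intersection, then doubling the radius), I may take a finite collection of representatives $A_1^{(n)}, \dots, A_{k_n}^{(n)} \in \mathcal E'$ whose $\tfrac{1}{n}$-balls cover $\widetilde{\mathcal E'}$. I would then define
\[
\mathcal F_n = \sigma\bigl( A_i^{(m)} : 1 \leq m \leq n,\ 1 \leq i \leq k_m \bigr).
\]
By construction, $\mathcal F_n \subseteq \mathcal F_{n+1}$, and since $\mathcal F_n$ is generated by finitely many sets it is a finite $\sigma$-algebra and is contained in $\mathcal E$. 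To verify uniform approximability: given $\epsilon>0$, choose $n \in \mathbb N$ with $1/n^2 < \epsilon$. For every $A \in \mathcal E'$, there exists $i \leq k_n$ with $d_\mu([A],[A_i^{(n)}]) < 1/n$, hence $\mu(A \Delta A_i^{(n)}) < 1/n^2 < \epsilon$, and $A_i^{(n)} \in \mathcal F_n$ by construction.

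The two small obstacles that need care are (i) that the centers of the finite cover obtained from total boundedness can be chosen inside $\mathcal E'$ (handled by the halve-the-radius argument), and (ii) that the sequence of $\sigma$-algebras generated at each scale must be assembled into an increasing filtration rather than an isolated sequence; accumulating the generators from all previous scales is the natural fix and costs nothing since each $\mathcal F_n$ remains generated by finitely many sets and is therefore finite. Otherwise the argument runs in complete parallel to Theorem \ref{tm:sep}, with the sole structural difference that the index $n_\epsilon$ here depends only on $\epsilon$ and not on $A$, which is exactly the upgrade from separability to total boundedness.
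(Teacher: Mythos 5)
Your proposal is correct and follows essentially the same route as the paper: uniform approximability at scale $\epsilon^2$ gives a finite cover by $\epsilon$-balls centred in the finite $\sigma$-algebra $\mathcal F_{n_\epsilon}$, and conversely the centres of finite $\tfrac1n$-covers are accumulated into an increasing sequence of finitely generated $\sigma$-algebras. The only divergence is your step (i): recentering the balls inside $\widetilde{\mathcal E'}$ is unnecessary, since the definition of uniform approximability only requires the approximants to lie in $\mathcal F_{n_\epsilon}\subseteq\mathcal E$, not in $\mathcal E'$ — the paper skips this and uses the ball centres as given.
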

\begin{proof}
Suppose that $\widetilde{\mathcal E'}$ is totally bounded. Then for each $\epsilon>0$ there exists a finite family of sets $\mathcal A_\epsilon=\{A^{(\epsilon)}_1,\ldots, A^{(\epsilon)}_{m_{\epsilon}}\}$ such that $\widetilde{\mathcal E'} \subseteq  \cup_{i=1}^{m} B([A^{(\epsilon)}_i],\epsilon).$ For arbitrary $B \in \mathcal E',$ $[B] \in \cup_{i=1}^{n} B([A^{(\epsilon)}_i],\epsilon),$ so there exists $A^{(\epsilon)}_i$ such that $\mu(A^{(\epsilon)}_i\Delta B)<\epsilon^2.$ 
If we consider $\epsilon_n=\frac{1}{n}$ and take $\mathcal{F}_n=\sigma\left(\cup_{i=1}^{n}\mathcal A_{\frac{1}{n}}\right)$ we get that $\mu$ is uniformly approximable on $\mathcal E'$ with respect to $(\mathcal F_n).$

Conversely, if there exists $(\mathcal F_n)$ where $|\mathcal F_n|$ is finite for each $n \in \mathbb N$ and $\mu$ is uniformly approximable on $\mathcal E'$ with respect to $(\mathcal F_n).$ For each $\epsilon>0$ there exists $n$ such that for each $B \in \mathcal E'$ there exists $A_n \in \mathcal F_n,$  $\mu(A_n\Delta B)<\epsilon^2.$ So 
$[B] \in B([A_n],\epsilon)$ and therefore
$\widetilde{\mathcal E'} \subseteq \cup_{A \in \mathcal{F}_n}B([A],\epsilon),$ which shows that $\widetilde{\mathcal E'}$ is totally bounded. 
\end{proof}
\begin{co}
\label{co:un_approx_whole_E}
\begin{itemize}
    \item[(a)] $(\widetilde{\mathcal E},d_{\mu})$ is is separable if and only if there exist $\mathcal F=(\mathcal F_n)_{n \in \mathbb N}$ a filtration for which $|\mathcal F_n|$ is finite for each $n \in \mathbb N$ such that $\mu$ is approximable on $\mathcal E$ with respect to $\mathcal F$.
    \item[(b)] $(\widetilde{\mathcal E},d_{\mu})$ is is compact if and only if there exist $\mathcal F=(\mathcal F_n)_{n \in \mathbb N}$ a filtration for which $|\mathcal F_n|$ is finite for each $n \in \mathbb N$ such that $\mu$ is approximable on $\mathcal E$ with respect to $\mathcal F$.
\end{itemize}
\end{co}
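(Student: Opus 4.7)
The plan is to observe that this corollary is an immediate specialisation of Theorems \ref{tm:sep} and \ref{tm:comp} to the case $\mathcal E' = \mathcal E$ (so that $\widetilde{\mathcal E'} = \widetilde{\mathcal E}$), combined with the completeness result of Theorem \ref{tm:compl} and the general metric-space fact (Theorem \ref{tm:comp_closed_totally_bounded}) that compactness is equivalent to completeness plus total boundedness. I also note that part (b) as printed appears to contain a typo: the statement should read \emph{uniformly} approximable, since Theorem \ref{tm:comp} characterises total boundedness, not compactness, via uniform approximability.

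For part (a), I would simply set $\mathcal E' = \mathcal E$ in Theorem \ref{tm:sep}. In that theorem separability of $\widetilde{\mathcal E'}$ as a subset of $(\widetilde{\mathcal E}, d_\mu)$ is shown to be equivalent to the existence of a filtration $\mathcal F = (\mathcal F_n)$ with $|\mathcal F_n| < \infty$ such that $\mu$ is approximable on $\mathcal E'$ with respect to $\mathcal F$. Since $\widetilde{\mathcal E}$ is itself, separability of $(\widetilde{\mathcal E}, d_\mu)$ corresponds precisely to the stated condition on $\mathcal E$. No further argument is needed beyond invoking the theorem.

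For part (b), the argument proceeds in two steps. First, by Theorem \ref{tm:compl}, the metric space $(\widetilde{\mathcal E}, d_\mu)$ is complete, so by Theorem \ref{tm:comp_closed_totally_bounded} it is compact if and only if it is totally bounded. Second, I apply Theorem \ref{tm:comp} with $\mathcal E' = \mathcal E$: total boundedness of $\widetilde{\mathcal E}$ is equivalent to the existence of a filtration $(\mathcal F_n)$ with each $\mathcal F_n$ finite such that $\mu$ is uniformly approximable on $\mathcal E$ with respect to $(\mathcal F_n)$. Chaining these equivalences gives the claim.

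There is no real obstacle here; this is a packaging corollary. The only point worth stressing is that completeness of the ambient space, established in Theorem \ref{tm:compl}, is what allows the passage from the total-boundedness characterisation of Theorem \ref{tm:comp} to the compactness characterisation in part (b). Without Theorem \ref{tm:compl}, one would only get a characterisation of precompactness. I would also make explicit in the write-up that the ``approximable'' in the printed statement of (b) is intended to be ``uniformly approximable'', to avoid confusion with part (a).
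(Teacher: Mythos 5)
Your proposal matches the paper's own proof exactly: part (a) is Theorem \ref{tm:sep} with $\mathcal E'=\mathcal E$, and part (b) chains Theorem \ref{tm:comp_closed_totally_bounded}, Theorem \ref{tm:compl}, and Theorem \ref{tm:comp}. You are also right that the printed statement of (b) should read ``uniformly approximable''; the paper's statement contains that typo.
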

\begin{proof}
The $(a)$ part follows directly from Theorem \ref{tm:sep}. The $(b)$ part follows from  Theorem \ref{tm:comp_closed_totally_bounded}, Theorem \ref{tm:compl} and Theorem \ref{tm:comp}. 
\end{proof}
Intuitively speaking, we can imagine a finite filtration $(\mathcal F_n)$   as a way to pixelise $E$  that in each step (as $n$ grows) we get a finer "grid". Following Theorem \ref{tm:sep} and Theorem \ref{tm:comp}, $(\widetilde{\mathcal E},d_{\mu})$ is separable if for each measurable set we can find a level of pixelisation such that the error is smaller than arbitrary $\epsilon>0$ and $(\widetilde{\mathcal E},d_{\mu})$ is compact if for each $\epsilon>0$ we can find a level of pixelisation such that all measurable sets are well approximated on this level, i.e. the error of pixelisation is smaller than $\epsilon$ for each measurable set.

Let us now classify  measures $\mu$ on $(\mathbb R^d,\mathfrak B (\mathbb R^d))$ based on topological properties of corresponding $(\widetilde{\mathfrak B (\mathbb R^d)},d_{\mu}).$ 

Further on, denote by $A^{(n)}_{(i_1,\ldots,i_d)}=\prod\limits_{j=1}^d\left[(i_j-1)/2^n,i_j/2^n\right\rangle, \ i_1,\ldots,i_d \in \mathbb Z,$ a $d$-dimensional half-open interval in $\mathbb R^d.$
We prove that an arbitrary Borel set in $\mathbb R^d$ can be approximated by the finite union of disjoint half-open $d$-intervals in a sense that the measure of symmetric difference between the Borel set and the union is arbitrary small.
\begin{lm}
\label{lm:aprox}
If $\mu$ is a outer regular measure on $\mathbb R^d$ then for an arbitrary $B \in \mathfrak{B}(\mathbb R^d)$ such that $\mu(B)<\infty$ and  an arbitrary $\epsilon >0,$ there exists $n_0 \in \mathbb N$ and finite $I\subseteq \mathbb \{-n_02^{n_0}+1\ldots,0, \ldots,n_02^{n_0}\}^d$ such that
\[ \mu \left(\bigcup\limits_{(i_1,\ldots,i_d)\in I}A^{(n_0)}_{(i_1,\ldots,i_d)}\Delta B\right)<\epsilon.
\] 
\end{lm}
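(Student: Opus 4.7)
The plan is to exploit outer regularity to replace $B$ by a superset open set $U$ with $\mu(U\setminus B)$ small, apply Theorem~\ref{tm:pixel} to write $U$ as a countable disjoint union of half-open dyadic cubes, truncate to a finite sub-collection, and then refine all of them to a common dyadic level $n_0$ chosen large enough to also fit inside the prescribed index box.

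First, by outer regularity and $\mu(B)<\infty$, I would choose an open set $U\supseteq B$ with $\mu(U)<\mu(B)+\epsilon/2$, so $\mu(U)<\infty$ and $\mu(U\setminus B)<\epsilon/2$. Theorem~\ref{tm:pixel} then gives a countable disjoint decomposition $U=\bigsqcup_{k=1}^{\infty} C_k$ where each $C_k$ is a half-open dyadic cube $A^{(n_k)}_{(j_1^{(k)},\ldots,j_d^{(k)})}$ for some level $n_k$ and integers $j_l^{(k)}$. Since $\mu(U)<\infty$, $\sigma$-additivity provides $N$ with $\mu\bigl(U\setminus\bigsqcup_{k=1}^N C_k\bigr)<\epsilon/2$; set $V:=\bigsqcup_{k=1}^N C_k$.

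The next step is to homogenise the dyadic levels. Pick $n_0\in\mathbb N$ with $n_0\ge\max\{n_1,\ldots,n_N\}$ and with $V\subseteq[-n_0,n_0)^d$, which is possible because $V$ is a finite union of bounded cubes. Each $C_k$ at level $n_k\le n_0$ splits into $2^{d(n_0-n_k)}$ disjoint half-open cubes at level $n_0$, all lying inside $[-n_0,n_0)^d$, and collecting their indices produces a finite set $I\subseteq\{-n_0 2^{n_0}+1,\ldots,n_0 2^{n_0}\}^d$ satisfying $V=\bigsqcup_{(i_1,\ldots,i_d)\in I}A^{(n_0)}_{(i_1,\ldots,i_d)}$.

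Since both $V$ and $B$ are contained in $U$, we have $V\Delta B\subseteq (U\setminus B)\cup(U\setminus V)$, whence
\[
\mu(V\Delta B)\;\le\;\mu(U\setminus B)+\mu(U\setminus V)\;<\;\epsilon/2+\epsilon/2\;=\;\epsilon,
\]
which is the required bound. The main technical point I foresee is the simultaneous choice of $n_0$: it must be large enough both to accommodate the dyadic refinement of all $C_k$ and to keep $V$ inside $[-n_0,n_0)^d$ so that every needed index fits in the explicit box $\{-n_0 2^{n_0}+1,\ldots,n_0 2^{n_0}\}^d$. Since only finitely many bounded cubes are involved, taking $n_0$ as the maximum of these two bounds works without further effort.
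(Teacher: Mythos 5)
Your proposal is correct and follows essentially the same route as the paper: outer regularity to pass to an open superset, Theorem~\ref{tm:pixel} to decompose it into disjoint dyadic cubes, truncation of the cube series using $\mu(U)<\infty$, and refinement to a common dyadic level. The only difference is cosmetic and in your favour: you skip the paper's preliminary truncation of $B$ to $B\cap[-n_1,n_1\rangle^d$ (splitting $\epsilon$ into two parts rather than three), since truncating the cube series already bounds everything, and you are more explicit about choosing $n_0$ large enough that all retained cubes fit inside $[-n_0,n_0\rangle^d$ so the indices land in the prescribed box.
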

\begin{proof}
Let us take an arbitrary $B \in \mathfrak{B}(\mathbb R^d),$ $\mu(B)<\infty$ and an arbitrary $\epsilon>0.$

Space $\mathbb R^d$ can be represented as a decreasing union of the half-open $d$-intervals $\left[-n,n\right\rangle^d, n \in \mathbb N.$
It holds that 
\begin{align*}
\mu(B)&=\mu\left(B\cap \mathbb R^d \right)=\mu\left(B \cap  \cup_{n \in \mathbb N} \left[-n,n\right\rangle^d \right) =\mu\left(  \cup_{n \in \mathbb N} B \cap\left[-n,n\right\rangle^d \right)\\
&=\lim \limits_{n \rightarrow \infty}\mu\left(B \cap\left[-n,n\right\rangle^d\right).
\end{align*}
The last equality follows from continuity of the measure $\mu$ from below with respect to the increasing sequence $\{B \cap\left[-n,n\right\rangle^d\}_n.$

Since $\mu (B)<\infty$ for given $\epsilon$ there exists $n_1 \in \mathbb N $ such that for each $n\geq n_1$ 
\begin{equation}
    \mu\left(B\Delta \left[ -n,n\right\rangle^d\right)=\mu(B)-\mu\left(B\cap \left[ -n,n\right\rangle^d\right)\leq \frac{\epsilon}{3}.
\end{equation}

Since $\mu$ is outer regular, for $B'=B\cap\left[ -n_1,n_1\right\rangle$ and arbitrary $\epsilon>0$ there exist an open set $O$ such that $B \subseteq O$ and

\[
\mu(B'\Delta O)=\mu(O\diagdown B')<\frac{\epsilon}{3}.
\]
Following Theorem \ref{tm:pixel},  $O$ can be represented as 
countable union of almost disjoint half-open cubes $(A_k)_{k\in \mathbb N}.$ Since
\[
\mu(O)=\sum\limits_{k\in \mathbb N}\mu(A_k)<\infty,
\]
for chosen $\epsilon$ there exists $N \in \mathbb N$ such that
\[
\mu\left( \cup_{k=1}^{N} A_k\right)= \sum\limits_{k=1}^{N}\mu(A_k) \geq \mu(O)-\frac{\epsilon}{3},
\]
so since $\cup_{k=1}^{N} A_k \subseteq O$
\[ \mu(\cup_{k=1}^{N} A_k \Delta O)\leq \frac{\epsilon}{3}.
\]
Set $n_2=\max\{n \in \mathbb N: A_k=A^{(n)}_{i_1,\ldots,i_d}, k=1,\ldots,N, (i_1,\ldots,i_d) \in \mathbb Z^d \}.$  
Note that if $m_1<m_2,$ each $A^{(m1)}_{(j_1,\ldots,j_d)}$ can be represented as finite union of disjointed $d$-intervals $A^{(m_2)}_{(j'_1,\ldots,j'_d)}.$

We take $n_0=\max\{n_1,n_2\}$
and define $I=\{(i_1,\ldots,i_d)\in \{-n_02^{n_0},\ldots,n_02^{n_0}\}^d: A^{(n_0)}_{(i_1,\ldots,i_d)}\subset A_k, k=1,\ldots N\}.$

Note that $\cup_{k=1}^{N}=\cup_{(i_1,\ldots,i_d)\in I}A^{(n_0)}_{(i_1,\ldots,i_d)}$

Therefore,
\[ \mu(\cup_{(i_1,\ldots,i_d)\in I}A^{(n_0)}_{(i_1,\ldots,i_d)}\Delta B)\leq \mu(\cup_{k=1}^{N} A_k \Delta O)+\mu(O\Delta B')+\mu(B\Delta B)<\epsilon.
\]
\end{proof}
\begin{tm}
\label{tm:sep_1}
 Let $(E,\mathcal E,\mu)=(\mathbb R^d, \mathfrak{B}(\mathbb R^d),\mu)$ where $\mu$ is an outer regular measure. Then $(\widetilde{\mathcal E}_{fin},d_{\mu})$ is separable.
\end{tm}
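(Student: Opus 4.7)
The plan is to invoke Theorem~\ref{tm:sep} with $\mathcal E' := \{A \in \mathfrak B(\mathbb R^d) : \mu(A) < \infty\}$, for which $\widetilde{\mathcal E'} = \widetilde{\mathcal E}_{fin}$. It then suffices to exhibit a filtration $(\mathcal F_n)_{n \in \mathbb N}$ of finite sub-$\sigma$-algebras of $\mathfrak B(\mathbb R^d)$ with respect to which $\mu$ is approximable on $\mathcal E'$; the Lemma~\ref{lm:aprox} we have just proved is tailor-made to produce exactly this.

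The natural candidate is
\[
\mathcal F_n := \sigma\bigl(\{A^{(n)}_{(i_1,\ldots,i_d)} : (i_1,\ldots,i_d) \in \{-n2^n+1,\ldots,n2^n\}^d\}\bigr),
\]
i.e., the $\sigma$-algebra generated by the bounded half-open dyadic cubes of side $2^{-n}$ lying inside $[-n,n)^d$. Since its generators are finitely many pairwise disjoint sets, $\mathcal F_n$ is finite. The inclusion $\mathcal F_n \subseteq \mathcal F_{n+1}$ would be checked via two standard facts: each level-$n$ cube decomposes into $2^d$ level-$(n+1)$ cubes whose indices still lie in the prescribed level-$(n+1)$ range, and that range strictly contains the level-$n$ one; together these place every generator of $\mathcal F_n$ inside $\mathcal F_{n+1}$.

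Approximability of $\mu$ on $\mathcal E'$ with respect to $(\mathcal F_n)$ is then immediate from Lemma~\ref{lm:aprox}: for any $B \in \mathcal E'$ and $\epsilon > 0$, the lemma supplies some $n_0$ and a finite $I$ for which $A' := \bigcup_{(i_1,\ldots,i_d) \in I} A^{(n_0)}_{(i_1,\ldots,i_d)}$ lies in $\mathcal F_{n_0}$ and satisfies $\mu(A' \Delta B) < \epsilon$. Theorem~\ref{tm:sep} then yields separability of $\widetilde{\mathcal E}_{fin}$ as a subspace of $(\widetilde{\mathcal E}, d_\mu)$, which coincides with its separability as a metric space in its own right.

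I do not expect any real obstacle here: the argument is essentially a translation of Lemma~\ref{lm:aprox} into the approximability framework. The only minor point requiring care is ensuring the index ranges of the generators nest correctly so that $(\mathcal F_n)$ is a genuine filtration; the choice above (or any increasing sequence of bounded ranges with the correct refinement structure) handles this cleanly. Note that under merely outer regular $\mu$ the cubes themselves may have infinite measure, so $\mathcal F_n$ can contain elements outside $\mathcal E'$, but Theorem~\ref{tm:sep} only asks for approximation from within $\bigcup_n \mathcal F_n$ and this causes no trouble.
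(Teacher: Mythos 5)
Your proposal is correct and follows essentially the same route as the paper: the paper likewise takes $\mathcal F_n=\sigma(\mathcal A_n)$ where $\mathcal A_n$ consists of the dyadic cubes $A^{(n)}_{(i_1,\ldots,i_d)}$ covering $[-n,n\rangle^d$ (together with the complement $\mathbb R^d\setminus[-n,n\rangle^d$, which your $\sigma$-generation produces automatically), observes that this is a finite filtration, and then combines Lemma~\ref{lm:aprox} with Theorem~\ref{tm:sep}. Your added remarks on the nesting of index ranges and on cubes of possibly infinite measure are sound and only make explicit what the paper leaves implicit.
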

\begin{proof}
\begin{figure}[H]
    \centering
    \begin{tikzpicture}
          \foreach \i in {-23,...,24}
    \foreach \j in {-23,...,24} 
    {\draw [draw=white!90!blue] (\i*0.125-0.125,\j*0.125-0.125) rectangle (\i*0.125,\j*0.125);}
    \foreach \i in {-7,...,8}
    \foreach \j in {-7,...,8} 
    {\draw[draw=blue] (\i*0.25-0.25,\j*0.25-0.25) rectangle (\i*0.25,\j*0.25);}
        \foreach \i in {-1,...,2}
    \foreach \j in {-1,...,2} 
    {\draw [draw=black!80!blue, line width=0.5mm] (\i*0.5-0.5,\j*0.5-0.5) rectangle (\i*0.5,\j*0.5);}
    \draw [-stealth](-3.5,0) -- (3.5,0);
     \draw [-stealth](0,-3.5) -- (0,3.5);
    \end{tikzpicture}
    \caption{Visualisation of sets in family $\{ A^{(n)}_{i_1,\ldots,i_d}: (i_1,\ldots, i_d)\in \{-n2^n+1,n2^n\}\}$ for $n=1$ (black), $n=2$ (blue) and $n=3$ (light blue).}
    \label{fig:tm33}
\end{figure}
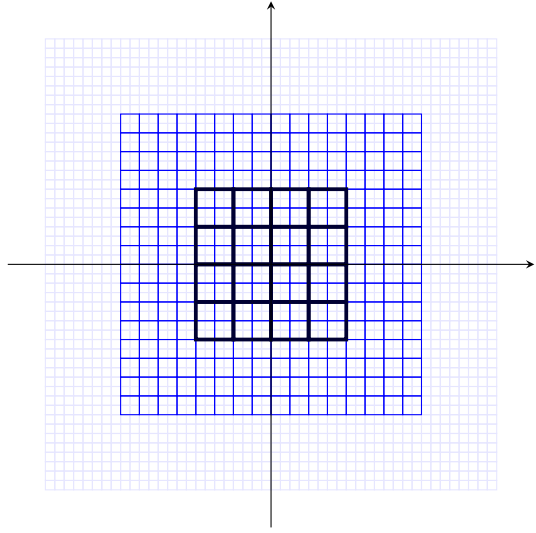
  For each $n \in \mathbb N$ the family $\mathcal{A}_n=\{ A^{(n)}_{i_1,\ldots,i_d}: (i_1,\ldots, i_d)\in \{-n2^n+1,n2^n\}\} \cup \mathbb R^d\diagdown \left[-n,n\right\rangle^d$ is finite (see Figure \ref{fig:tm33} for visualisation). 
  Denote by $$\mathcal F_n=\{ A \in \mathcal{B}(\mathbb R^d): A \text{ can be written as a union of sets from \ } \mathcal{A}_n\} \cup \{\emptyset\}.$$
  Note that since $\mathcal A_n$ forms a finite partition of $\mathbb R^d,$ $\mathcal F_n=\sigma(\mathcal A_n).$
It holds that $|\mathcal F_n|<\infty.$ Also note that $\mathcal F_n \subset \mathcal F_{n+1}.$ Let $\mathcal F=\cup_{n \in \mathbb N}\mathcal F_n,$ and note that  $\mathcal F$ is countable.  
Note that from Lemma \ref{lm:aprox} it follows that the $\mu$ is approximable on $\mathcal E_{fin}$ with respect to $\mathcal F_n),$ and from Theorem \ref{tm:sep} it follows that $(\widetilde{\mathcal E}_{fin},d_{\mu})$ is separable. 
\end{proof}

 \begin{co}
For any outer regular Borel measure $\mu$ on $(\mathbb R^d, \mathfrak{B}( \mathbb R^d)),$ $(\widetilde{\mathfrak{B}( \mathbb R^d)}_{fin},d_{\mu})$ is a Polish space (i.e. complete separable metric space).
\end{co}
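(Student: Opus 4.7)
The plan is to package two ingredients that are essentially already in hand: separability of $(\widetilde{\mathfrak{B}(\mathbb R^d)}_{fin},d_\mu)$ and completeness of $(\widetilde{\mathfrak{B}(\mathbb R^d)}_{fin},d_\mu)$. A Polish space is by definition a complete separable metric space, so once these two properties are established the corollary is immediate.

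For separability, I would simply invoke Theorem \ref{tm:sep_1}, which gives exactly this conclusion under the hypothesis that $\mu$ is an outer regular Borel measure on $\mathbb R^d$. No additional argument is needed for this half.

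For completeness, I would argue as in the remark following Theorem \ref{tm:compl}. The map $\iota:\widetilde{\mathcal E}_{fin}\to \mathcal I$ sending $[A]\mapsto \1_A$ is an isometry onto the set of indicators in $L^2(\mathbb R^d,\mathfrak B(\mathbb R^d),\mu)$, because for $A,B$ of finite $\mu$-measure one has $d_\mu([A],[B])^2=\mu(A\Delta B)=\int |\1_A-\1_B|^2\,d\mu$. Completeness of $\widetilde{\mathcal E}_{fin}$ therefore reduces to showing that $\mathcal I$ is a closed subset of the Hilbert space $L^2$: if $\1_{A_n}\to f$ in $L^2$, pick a pointwise-a.e.\ subsequence $(\1_{A_{n_k}})$ and, exactly as in the argument preceding Theorem \ref{tm:compl}, sandwich $f=\liminf_k\1_{A_{n_k}}=\1_{\liminf_k A_{n_k}}\le \1_{\limsup_k A_{n_k}}=\limsup_k\1_{A_{n_k}}=f$ $\mu$-a.e., so $f$ equals an indicator $\mu$-a.e.\ and the limiting set has finite measure since $\|f\|_{L^2}^2=\mu(\limsup_k A_{n_k})<\infty$. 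Then apply Theorem \ref{tm:complete_closed}.

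The only mildly subtle point is remembering to check that the limit actually lies in $\widetilde{\mathcal E}_{fin}$ rather than merely in $\widetilde{\mathcal E}$, which is ensured by $\|f\|_{L^2}^2=\mu(\{f=1\})<\infty$. Everything else is a direct citation, so the corollary follows by combining Theorem \ref{tm:sep_1} with completeness of $(\widetilde{\mathcal E}_{fin},d_\mu)$.
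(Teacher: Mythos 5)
Your proposal is correct and follows essentially the same route the paper intends: the corollary is stated without proof because it is the immediate combination of Theorem \ref{tm:sep_1} (separability) with the completeness of $(\widetilde{\mathcal E}_{fin},d_\mu)$ already recorded in the remark after Theorem \ref{tm:compl}, where it is identified isometrically with the closed set of indicators in $L^2$. Your added check that the $L^2$-limit is the indicator of a set of \emph{finite} measure is a worthwhile detail the paper glosses over, but it does not change the argument.
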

\begin{re}
 Since $(\widetilde{\mathfrak{B}(\mathbb R^d)}_{fin},d_{\mu})$ is isometric to set of indicators in $L^2(\mathbb R^d, \mathfrak{B}(\mathbb R^d),\mu),$ we can also conclude that in case of outer regular measure $\mu$ set of indicators in $L^2(\mathbb R^d, \mathfrak{B}(\mathbb R^d),\mu)$ is a Polish space. 
\end{re}
\begin{co}
\label{co:fin_polish}
For any finite Borel measure $\mu$ on $(\mathbb R^d, \mathfrak{B}( \mathbb R^d)),$ $(\widetilde{\mathfrak{B}( \mathbb R^d)},d_{\mu})$ is a Polish space (i.e. complete separable metric space).
\end{co}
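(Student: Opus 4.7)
The plan is to assemble this corollary directly from three previously established ingredients, so the proof will be essentially a short verification rather than a new argument.

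First, I would establish completeness. This is immediate from Theorem \ref{tm:compl}, which gives completeness of $(\widetilde{\mathcal E}, d_\mu)$ for any measure space $(E,\mathcal E,\mu)$; it applies in particular when $(E,\mathcal E,\mu) = (\mathbb R^d, \mathfrak{B}(\mathbb R^d), \mu)$ with $\mu$ finite, so no extra work is needed here.

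Next, for separability, the key observation is that when $\mu$ is a finite measure, every Borel set $A$ satisfies $\mu(A) \le \mu(\mathbb R^d) < \infty$, hence
\[
\widetilde{\mathfrak{B}(\mathbb R^d)}_{fin} = \{[A] : A \in \mathfrak{B}(\mathbb R^d),\ \mu(A) < \infty\} = \widetilde{\mathfrak{B}(\mathbb R^d)}.
\]
So it suffices to show $(\widetilde{\mathfrak{B}(\mathbb R^d)}_{fin}, d_\mu)$ is separable, and this is precisely the content of Theorem \ref{tm:sep_1}, provided $\mu$ is outer regular. But by Theorem \ref{tm:regular}, any finite measure on $\mathbb R^d$ is regular, in particular outer regular. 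Therefore Theorem \ref{tm:sep_1} applies and yields separability.

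There is no real obstacle; the only point to flag is the identification $\widetilde{\mathfrak{B}(\mathbb R^d)}_{fin} = \widetilde{\mathfrak{B}(\mathbb R^d)}$, which relies crucially on the finiteness of $\mu$ and is the reason one must separately handle the infinite-measure case elsewhere in the paper. Combining completeness from Theorem \ref{tm:compl} with separability from Theorem \ref{tm:sep_1} (via Theorem \ref{tm:regular}) gives that $(\widetilde{\mathfrak{B}(\mathbb R^d)}, d_\mu)$ is a complete separable metric space, i.e.\ Polish.
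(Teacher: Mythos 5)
Your proposal is correct and matches the derivation the paper intends: the corollary is obtained by noting that a finite Borel measure on $\mathbb R^d$ is regular (hence outer regular) by Theorem \ref{tm:regular}, that finiteness of $\mu$ gives $\widetilde{\mathfrak{B}(\mathbb R^d)}_{fin}=\widetilde{\mathfrak{B}(\mathbb R^d)}$, and then applying Theorem \ref{tm:sep_1} for separability together with Theorem \ref{tm:compl} for completeness. The paper only mentions in passing an alternative route via separability of the set of indicators in $L^2$, but your argument is the one the placement of the corollary presupposes.
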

 Note that Corollary \ref{co:fin_polish} could be proven using separability of set indicators in $L^2(\mathbb R^d, \mathfrak{B}(\mathbb R^d),\mu).$

As it has been already mentioned in Introduction,  if $\mathcal E$ is countably generated and $\mu$ is a $\sigma$-finite measure then $L^2(E,\mathcal E,\mu)$ and set of indicators in $L^2(E,\mathcal E,\mu)$ are separable. For a finite $\mu,$ since $(\widetilde{\mathcal E}_{fin},d_{\mu})=(\widetilde{\mathcal E},d_{\mu})$ and $(\widetilde{\mathcal E}_{fin},d_{\mu})$ is homeomorfic to set of indicators, we can conclude $(\widetilde{\mathcal E},d_{\mu})$ is separable. 
We provide an alternative proof of this fact using the notion of approximability.
\begin{tm}
\label{tm:sep_2}
Suppose that there exists $\mathcal C$ a countable family of subsets of $E$  such that $\mathcal E=\sigma(\mathcal C)$ and $(E,\mathcal E,\mu)$ is a finite measure space. Then $(\widetilde{\mathcal E},d_{\mu})$ is separable. 
\end{tm}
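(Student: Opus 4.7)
The plan is to reduce the claim to Corollary \ref{co:un_approx_whole_E}(a) by exhibiting an increasing filtration of finite sub-$\sigma$-algebras on which $\mu$ is approximable. Enumerate the countable generator as $\mathcal{C}=\{C_1,C_2,\dots\}$ and set $\mathcal{F}_n=\sigma(C_1,\ldots,C_n)$. Each $\mathcal{F}_n$ is atomic with at most $2^n$ atoms, so $|\mathcal{F}_n|\leq 2^{2^n}<\infty$, and $\mathcal{F}_n\subseteq \mathcal{F}_{n+1}$ by construction. The union $\mathcal{A}=\bigcup_n \mathcal{F}_n$ is an increasing union of a chain of $\sigma$-algebras, hence an algebra, and $\sigma(\mathcal{A})=\sigma(\mathcal{C})=\mathcal{E}$.

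It then remains to show that for every $A\in\mathcal{E}$ and every $\epsilon>0$ there exists $A'\in\mathcal{A}$ with $\mu(A\Delta A')<\epsilon$ (equivalently, $A'\in\mathcal{F}_n$ for some $n$). I would run a standard monotone-class argument on
\[
\mathcal{D}=\{A\in\mathcal{E}:\forall\,\epsilon>0\ \exists\, A'\in\mathcal{A},\ \mu(A\Delta A')<\epsilon\}.
\]
Trivially $\mathcal{A}\subseteq \mathcal{D}$. For closure under increasing limits, if $A_n\uparrow A$ then continuity of $\mu$ from below gives $\mu(A\Delta A_n)=\mu(A\setminus A_n)\to 0$; choose $n$ with $\mu(A\Delta A_n)<\epsilon/2$, then $A'\in\mathcal{A}$ with $\mu(A_n\Delta A')<\epsilon/2$, and apply the triangle inequality $\mu(A\Delta A')\leq \mu(A\Delta A_n)+\mu(A_n\Delta A')$ (which holds since $A\Delta A'\subseteq (A\Delta A_n)\cup (A_n\Delta A')$). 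The case $A_n\downarrow A$ is symmetric, using continuity of $\mu$ from above. The monotone class theorem then yields $\mathcal{D}\supseteq \sigma(\mathcal{A})=\mathcal{E}$, so $\mu$ is approximable on $\mathcal{E}$ with respect to $(\mathcal{F}_n)$, and Theorem \ref{tm:sep} applied with $\mathcal{E}'=\mathcal{E}$ delivers separability of $(\widetilde{\mathcal{E}},d_\mu)$.

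The main subtlety is the decreasing-limit step in the monotone-class argument: it requires continuity of $\mu$ from above, which in turn requires the sequence to be dominated by a set of finite measure. Finiteness of $\mu$ makes this automatic and is precisely the hypothesis the theorem invokes; without it the analogous statement would have to be phrased for $\widetilde{\mathcal{E}}_{fin}$ instead, paralleling the $\mathbb{R}^d$ situation of Theorem \ref{tm:sep_1}. Beyond that, all the steps are routine: the construction of $\mathcal{F}_n$ is the natural one, the finiteness $|\mathcal{F}_n|<\infty$ comes from being generated by finitely many sets, and the approximation step is essentially the Carathéodory-style density of an algebra in the $\sigma$-algebra it generates, measured in the $\mu(\cdot\,\Delta\,\cdot)$ pseudometric.
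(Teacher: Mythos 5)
Your proof is correct, and it takes a genuinely different route from the paper. The paper reduces ``without loss of generality'' to the case where $\mathcal C$ is a countable \emph{disjoint} family covering $E$, and then writes every $A\in\sigma(\mathcal C)$ as an at most countable union of the $C_k$, truncating the tail of the convergent series $\sum_k\mu(C_k)$ to get the approximant in some $\mathcal F_{n_\epsilon}=\sigma(C_1,\ldots,C_{n_\epsilon})$. You instead keep the generator arbitrary, form the algebra $\mathcal A=\bigcup_n\sigma(C_1,\ldots,C_n)$, and run a monotone class argument on the family $\mathcal D$ of sets approximable from $\mathcal A$ in the $\mu(\cdot\,\Delta\,\cdot)$ pseudometric, using finiteness of $\mu$ to justify continuity from above in the decreasing-limit step. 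Your route is the more robust one: for a general countable generator (e.g.\ the intervals generating $\mathfrak B([0,1])$), it is simply not true that every set in $\sigma(\mathcal C)$ is a countable union of generators, and passing to a disjoint refinement that retains this property is not an innocuous reduction --- the sets $\bigcap_n C_n^{\epsilon_n}$ indexed by $0$--$1$ sequences form an uncountable family in general. Your Carath\'eodory-style density argument sidesteps this entirely and proves the statement in the generality in which it is asserted, at the cost of invoking the monotone class theorem rather than elementary series manipulation. Both proofs correctly land on approximability of $\mu$ on $\mathcal E$ with respect to a filtration of finite $\sigma$-algebras and then conclude via Theorem \ref{tm:sep}.
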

\begin{proof}
Without loss of generality, we can suppose that the family $\mathcal C$ is a family of disjointed sets that cover $E$ and $\mathcal{C}=\{C_n: n\in \mathbb N\}.$ 
We define $\mathcal F_n=\sigma (C_1,\ldots,C_n).$ For an arbitrary $A \in \mathcal E$ there exists $\{C_{k}: k \in I\}$ such that $C_k \in \mathcal C$ and $I$ is at most countable and $A=\cup_{k \in I}C_k.$
It holds $\mu(A)=\mu\left(\cup_{k \in I}C_k\right)=\sum\limits_{k \in I}\mu(C_k) <\infty.$
If $I$ is finite, we can take $n_{\epsilon}=\max I$ and for $A'=A=\cup_{k \in I}C_k \in \mathcal{F}_{n_{\epsilon}}$ it holds $\mu(A\Delta A')=0<\epsilon.$
If $I$ is countable, we can suppose $I=\{ k_n: n \in \mathbb N.$ Since $\mu(A)=\sum\limits_{n \in \mathbb N}\mu(C_{k_n})<\infty,$ for $\epsilon>0$ there exists $n' \in \mathbb N$ such that $\sum\limits_{n=n'+1}^{\infty}\mu(C_{k_n})<\epsilon.$ If we take $n_{\epsilon}=k_{n'}$ and $A'=\cup_{n=1}^{n'}C_{k_n} \in \mathcal F_{n_{\epsilon}},$ it holds $\mu(A\Delta A')=\sum\limits_{n=n'+1}^{\infty}\mu(C_{k_n})<\epsilon.$ So, $\mu$ is approximable on $\mathcal E$ with respect to finite $(\mathcal F_n)$ and therefore $(\widetilde{\mathcal E},d_{\mu})$ is separable.  
\end{proof}

 \begin{tm}
 \label{tm:decomp}
 Suppose $\mu= \mu_1+ \mu_2$  and suppose that $\mu_2$ is not (uniformly) approximable on $\mathcal E'$ with respect to a  finite filtration, then $\mu$ is not (uniformly) approximable on $\mathcal E'$ with respect to any finite filtration.
 \end{tm}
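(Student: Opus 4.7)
The plan is to establish the statement by contraposition: I would assume that $\mu$ is (uniformly) approximable on $\mathcal E'$ with respect to some finite filtration $\mathcal F = (\mathcal F_n)_{n \in \mathbb N}$, and then show that $\mu_2$ is also (uniformly) approximable on $\mathcal E'$ with respect to the \emph{same} filtration $\mathcal F$. This directly contradicts the hypothesis that no such filtration exists for $\mu_2$.

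The one key observation driving the argument is the pointwise inequality $\mu_2(B) \leq \mu(B)$ for every $B \in \mathcal E$, which follows immediately from $\mu = \mu_1 + \mu_2$ together with the non-negativity of the measure $\mu_1$. Applied to symmetric differences, this yields $\mu_2(A \Delta A') \leq \mu(A \Delta A')$ for any $A \in \mathcal E'$ and $A' \in \mathcal E$.

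For the approximability case: given $A \in \mathcal E'$ and $\epsilon > 0$, by the assumption on $\mu$ there exist $n_\epsilon \in \mathbb N$ and $A' \in \mathcal F_{n_\epsilon}$ with $\mu(A \Delta A') < \epsilon$, and the inequality above then gives $\mu_2(A \Delta A') < \epsilon$. The uniform approximability case follows by the identical argument, except that the integer $n_\epsilon$ provided by the uniform approximability of $\mu$ is chosen independently of $A \in \mathcal E'$, and hence the resulting witness for $\mu_2$ is also uniform in $A$.

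I do not anticipate any genuine obstacle; the statement is essentially a monotonicity remark. The only thing to be careful about is ensuring the filtration that witnesses (uniform) approximability of $\mu_2$ can be taken identical to the one for $\mu$ (rather than needing modification), which the inequality $\mu_2 \leq \mu$ makes transparent, so the contrapositive closes the argument for both parts simultaneously.
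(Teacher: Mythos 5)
Your proposal is correct and rests on exactly the same observation as the paper's proof, namely that $\mu = \mu_1 + \mu_2 \geq \mu_2$ so that $\mu_2(A\Delta A') \leq \mu(A\Delta A')$; you phrase it as a contrapositive (approximability of $\mu$ transfers to $\mu_2$ along the same filtration) while the paper argues directly (a witness $B$ of non-approximability for $\mu_2$ is also one for $\mu$), but these are the same one-line monotonicity argument.
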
 
 \begin{proof}
 We prove the result for uniformity approximability since the proof in a case of approximability is similar.

 Since $\mu_2$ is not uniformly approximable on $\mathcal E'$ with respect to any  finite filtration, for arbitrary $(F_n)$ and $\epsilon>0$ there exists $B \in \mathcal E'$ such that $\mu_2(A\Delta B)\geq \epsilon$ for all $A \in \cup_{n \in \mathbb N}\mathcal F_n.$ But then
 $$\mu(A\Delta B)=\mu_1(A\Delta B)+\mu_2(A\Delta B)\geq \epsilon,$$
 from which follows that $\mu$ is not uniformly approximable approximable on $\mathcal E'$ with respect to finite filtration.
 
 \end{proof}
   Localizable measures can be decomposed into a non-atomic part and purely atomic part (Theorem \ref{tm: atomic_nonatomic}). Following Theorem \ref{tm:decomp}, measure $\mu$ is (uniformly) approximable if its non-atomic and purely atomic part are (uniformly) approximable. In other words, $(\widetilde{\mathcal E},d_{\mu})$ is separable (compact) if non-atomic and purely atomic part of $\mu$ are (uniformly) approximable.  Therefore, we focus on separability and compactness properties of $(\widetilde{\mathcal E},d_{\mu}),$ first in a case when $\mu$ is non-atomic and then in a case of purely atomic $\mu.$  
\begin{tm}
\label{tm:nonatomic_not_sep}
If $\mu$ is non-atomic measure and $\mu(E)=\infty$, then $(\widetilde{\mathcal E},\mu)$ is not separable.
\end{tm}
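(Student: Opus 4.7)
The plan is to exhibit an uncountable family of elements of $\widetilde{\mathcal E}$ whose pairwise $d_\mu$-distances are bounded below by a positive constant, which immediately rules out the existence of any countable dense subset.

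The first and main step is to construct a pairwise disjoint sequence $(F_n)_{n\in\mathbb N}$ in $\mathcal E$ with $\mu(F_n)=1$ for every $n$. The key auxiliary fact is that $E$ contains subsets of arbitrarily large finite measure: set $M=\sup\{\mu(F):F\in\mathcal E,\ \mu(F)<\infty\}$; by continuity of $\mu$ from below the supremum is attained on some $F^{\ast}$, and if $M<\infty$ then $E\setminus F^{\ast}$ has infinite measure, so by non-atomicity one can split off a further set of small positive finite measure and adjoin it to $F^{\ast}$, contradicting the definition of $M$. Hence $M=\infty$. Given any finite-measure $G$ with $\mu(G)\ge 1$, Sierpinski's classical range theorem for non-atomic finite measures yields $F\subseteq G$ with $\mu(F)=1$. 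Inductively, once disjoint unit-measure $F_1,\ldots,F_{n-1}$ are constructed, the remainder $E\setminus(F_1\cup\cdots\cup F_{n-1})$ still has infinite $\mu$-measure, so applying the same argument inside this remainder produces a set $F_n$ of measure $1$ disjoint from all previous ones.

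Given $(F_n)$, for each $S\subseteq\mathbb N$ put $A_S=\bigcup_{n\in S}F_n\in\mathcal E$. For distinct $S,T$ the symmetric difference is the disjoint union $\bigcup_{n\in S\Delta T}F_n$, so $\mu(A_S\Delta A_T)=|S\Delta T|\ge 1$ and hence $d_\mu([A_S],[A_T])\ge 1$. The family $\{[A_S]:S\subseteq\mathbb N\}$ is therefore of cardinality $2^{\aleph_0}$ and its elements are pairwise at distance at least $1$. The open balls $B([A_S],1/2)$ are consequently pairwise disjoint, and any dense subset of $\widetilde{\mathcal E}$ must meet each of them; thus no countable dense subset can exist and $(\widetilde{\mathcal E},d_\mu)$ fails to be separable.

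The principal obstacle is the initial construction: because $\mu$ is not assumed $\sigma$-finite, producing disjoint sets of prescribed finite measure cannot be effected by a standard exhaustion argument and instead requires combining non-atomicity with Sierpinski's range theorem, working at each stage within the remaining infinite-measure complement. Once the sequence $(F_n)$ is in hand, the final uncountable-discrete-subset argument is routine.
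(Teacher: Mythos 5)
Your proof is correct to the same standard as the paper's, but it takes a genuinely different route. The paper argues by contradiction through its approximability characterisation (Theorem \ref{tm:sep}): assuming separability it obtains a finite filtration $(\mathcal F_n)$, selects a nested sequence of cells $A^{(n)}_{i_n}$ of infinite measure, and diagonalises against the filtration by planting disjoint pieces $B_n\subset A^{(n)}_{i_n}$ with $\mu(B_n)=2^{-n}$, so that $B=\bigcup_n B_n$ cannot be approximated from any $\mathcal F_n$. You instead exhibit, once and for all, a family $\{[A_S]:S\subseteq\mathbb N\}$ of cardinality $2^{\aleph_0}$ whose elements are pairwise at $d_\mu$-distance at least $1$, which refutes separability directly from the definition, without invoking Theorem \ref{tm:sep} and without reference to any candidate filtration; it is more self-contained and gives the slightly stronger conclusion that $\widetilde{\mathcal E}$ contains an uncountable uniformly discrete subset (so it is not even Lindel\"of or second countable). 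Both arguments run on the same measure-theoretic engine --- producing disjoint subsets of prescribed positive finite measure via non-atomicity and the Sierpi\'nski range theorem --- and both silently assume at that point that a set of infinite measure contains a subset of positive finite measure (your ``split off a further set of small positive finite measure'' from $E\setminus F^{\ast}$; the paper's extraction of $B_n\subset A^{(n)}_{i_n}$ with $\mu(B_n)=2^{-n}$ from an infinite-measure cell). With the paper's definition of atom, which requires $0<\mu(A)<\infty$, this is an unstated semifiniteness hypothesis (a set all of whose measurable subsets have measure $0$ or $\infty$ is not an atom), but since the paper's own proof needs it equally, your argument stands on the same footing.
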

\begin{proof}
Suppose that $(\widetilde{\mathcal E},\mu)$ is separable. Then there exists a filtration $(\mathcal F_n),$ $|\mathcal F_n|<\infty$ on $(E,\mathcal E,\mu)$ such that $\mu$ is approximable on $\mathcal E$ with respect to $(\mathcal F_n).$ We can suppose that $\mathcal F_n=\sigma\left(A^{(n)}_1,\ldots,A^{(n)}_{m_n}\right),$ where $A^{(n)}_i \in \mathcal E$ are disjoint  and $\bigcup\limits_{i=1}^{m_n}A^{(n)}_i=E.$ Since $\infty=\mu(E)=\sum\limits_{i=1}^{m_n}\mu(A^{(n)}_i),$ for each $n$ we can find $i_n,$ $1\leq i_n\leq m_n$ such that $\mu(A^{(n)}_{i_n})=\infty$ and since $\mathcal F_n \subseteq \mathcal F_{n+1}$ we can choose $(i_n)_n$ in a way that $A^{(n+1)}_{i_{n+1}}\subseteq A^{(n)}_{i_n},$ $n \in \mathbb N.$ Since $\mu$ is non-atomic, we can construct inductively a sequence of measurable sets $B_n\subset A^{(n)}_{i_n}$ in a following way: $\mu(B_n)=2^{-n}$ and $B_{n+1} \subset A^{(n+1)}_{i_{n+1}}\diagdown B_n.$ Note that $B_n$ are disjointed. Let $B=\cup_{n=1}^{\infty}B_n.$ For each $n \in \mathbb N,$ and an arbitrary $A' \in \mathcal F_n,$ $$\mu(B\Delta A')=\left\{ \begin{array}{ll}
\mu(B)+\mu(A')\geq 1 & A'\cap A^{(n)}_{i_n}=\emptyset,\\
\mu(A^{(n)}_{i_n}\diagdown B)=\infty, & A^{(n)}_{i_n}\subseteq A'.
\end{array}\right.$$
So, $\mu(B\Delta A')\geq 1$ for each $n \in \mathbb N$ which contradicts the assumption of approximability.
\end{proof}
\begin{tm}
\label{tm:atomless_not_compact}
If measure $\mu$ on $(E,\mathcal E)$ is non-atomic  than $(\widetilde{\mathcal E},d_{\mu})$ is not compact or locally compact.
\end{tm}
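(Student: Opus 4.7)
The plan is to exhibit, for every $r>0$ with $r^2 \leq \mu(E)$, an infinite sequence of classes $\{[G_k]\}_{k\in\mathbb{N}}$ inside the open ball $B([\emptyset],r)$ whose pairwise distances all equal $r/\sqrt{2}$. Such a sequence has no convergent subsequence, so its existence simultaneously forbids total boundedness of $\widetilde{\mathcal{E}}$ and of any compact neighbourhood of $[\emptyset]$; combined with the completeness proved in Theorem~\ref{tm:compl} and the characterisation in Theorem~\ref{tm:comp_closed_totally_bounded}, this yields both claims. I would bypass the uniform-approximability machinery of Theorem~\ref{tm:comp} here, since exhibiting an explicit separated sequence handles non-compactness and non-local-compactness uniformly.

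The construction is a Rademacher-type dyadic refinement. Assuming $\mu$ is non-trivial (else the statement is vacuous), the standard Sierpi\'nski halving argument for non-atomic measures shows that for every $A\in\mathcal{E}$ with $0<\mu(A)\leq\infty$ and every $t\in(0,\mu(A))$ there is $B\subseteq A$ in $\mathcal{E}$ with $\mu(B)=t$. Fix $r>0$ with $r^2\leq\mu(E)$ (possible since $\mu(E)>0$) and pick $F\in\mathcal{E}$ with $\mu(F)=r^2$. Iterating the halving, build for each $k\geq 1$ a measurable partition $\{F_\sigma : \sigma\in\{0,1\}^k\}$ of $F$ with $\mu(F_\sigma)=r^2/2^k$, refining the level-$(k-1)$ partition. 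Set
\[
G_k \;=\; \bigcup_{\sigma\in\{0,1\}^k,\ \sigma_k=0} F_\sigma.
\]
A direct count on the dyadic tree (at the level-$\max(k,l)$ refinement, $G_k$ and $G_l$ become unions of level-$\max(k,l)$ pieces subject to fixing the $k$-th, resp.\ $l$-th, coordinate to $0$) yields $\mu(G_k)=r^2/2$ and $\mu(G_k\cap G_l)=r^2/4$ for $k\neq l$, whence $\mu(G_k\Delta G_l)=r^2/2$. Thus $d_\mu([\emptyset],[G_k]) = r/\sqrt{2} < r$ and $d_\mu([G_k],[G_l]) = r/\sqrt{2}$ for all $k\neq l$.

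Non-compactness of $(\widetilde{\mathcal{E}},d_\mu)$ is then immediate: applying the construction with any admissible $r$ produces an infinite $r/\sqrt{2}$-separated subset of $\widetilde{\mathcal{E}}$, contradicting total boundedness and hence, by Theorem~\ref{tm:comp_closed_totally_bounded}, compactness. For local compactness at $[\emptyset]$, suppose toward contradiction that $V$ is a compact neighbourhood of $[\emptyset]$; then $V\supseteq B([\emptyset],r)$ for some $r>0$, and by shrinking $r$ we may assume $r^2\leq\mu(E)$. The sequence $\{[G_k]\}$ built for this $r$ lies in $B([\emptyset],r)\subseteq V$ and is $r/\sqrt{2}$-separated, contradicting sequential compactness of $V$; hence $[\emptyset]$ has no compact neighbourhood and $(\widetilde{\mathcal{E}},d_\mu)$ is not locally compact.

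The one technical point to be careful about is the dyadic refinement itself. Non-atomicity is invoked only through divisibility of a single finite-measure set (the chosen $F$ with $\mu(F)=r^2<\infty$), so no pathology from pieces of infinite measure arises at intermediate levels; once the tree is in place, the computations of $\mu(G_k)$, $\mu(G_k\cap G_l)$ and $\mu(G_k\Delta G_l)$ reduce to counting the proportion of binary strings satisfying the relevant coordinate constraints. Everything else is the elementary observation that an infinite uniformly separated set cannot be contained in any totally bounded (and in particular, any compact) subset of a metric space.
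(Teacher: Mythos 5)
Your argument is correct and reaches both conclusions, but by a genuinely different route from the paper's. The paper stays inside its approximability framework: it invokes Theorem~\ref{tm:comp}, takes an arbitrary finite filtration $(\mathcal F_n)$ generated by partitions $\{A^{(n)}_i\}$, and uses non-atomicity to carve from each cell a subset $C^{(n)}_i$ with $\mu(C^{(n)}_i)=\alpha\,\mu(A^{(n)}_i)$, so that $C_n=\bigcup_i C^{(n)}_i$ has $\mu(C_n)=\epsilon$ yet $\mu(C_n\Delta A)\ge\epsilon$ for every $A\in\mathcal F_n$; this is a filtration-adapted obstruction in the spirit of the triangle sets $T^{(\epsilon)}_m$ from the motivating example, and non-local-compactness is then extracted by observing that the obstructing sets all lie in a fixed closed ball around $[\emptyset]$. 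You instead build a single filtration-independent obstruction: the dyadic Rademacher family $\{[G_k]\}$, pairwise $r/\sqrt{2}$-separated and contained in $B([\emptyset],r)$, which rules out total boundedness of the whole space and of every neighbourhood of $[\emptyset]$ in one stroke. Your counts $\mu(G_k)=r^2/2$, $\mu(G_k\cap G_l)=r^2/4$ and hence $\mu(G_k\Delta G_l)=r^2/2$ check out, and your route is more self-contained (no appeal to Theorem~\ref{tm:comp}), at the cost of not illustrating the approximability machinery the paper is promoting. One caveat applies to both proofs equally: each needs a measurable set of prescribed finite positive measure to get started, which the Sierpi\'nski halving supplies only when $E$ contains some set of finite positive measure. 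In the degenerate situation where every nonnull measurable set has infinite measure (e.g.\ $\mathcal E=\{\emptyset,E\}$ with $\mu(E)=\infty$, which is non-atomic since atoms must have finite measure), $\widetilde{\mathcal E}$ is finite and therefore compact, so --- contrary to your aside --- the degenerate cases make the statement false rather than vacuous; this is an implicit nondegeneracy hypothesis that the paper's own proof also leaves unstated.
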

\begin{proof}
Suppose that $\mu$ is non-atomic, then for each $B \in \mathcal E$ such that $\mu(B)>0,$ there exists $A \in \mathcal E$  such that $A\subseteq B$ and $0<\mu(A)<\mu(B).$ So for each $B \in \mathcal E$ and for each $r \in \mathbb R,$ $0\leq r<\mu(B)$ there exists a measurable set $A\subset B$ such that $\mu(A)=r.$ 

Let  $(\mathcal F_n)_{n \in \mathbb N}$ be an arbitrary filtration on $\mathcal E ,$ such that $\mathcal F_n$ is finite for each $n \in \mathbb N.$   
In this case, we can assume that $\mathcal F_n= \sigma \{ A^{(n)}_i, i \in I_n\}$ where $A^{(n)}_i$ are disjoint, $\cup_{i \in I_n} A^{(n)_i}=\mathbb R^d$ and $I_n$ is a finite set of indices.

Suppose first that $\mu$ is finite. 
Let $0<\epsilon<\frac{\mu(E)}{2}$ and $\alpha=\frac{\epsilon}{\mu(E)}.$ Note that $0<\alpha<\frac{1}{2}.$ 
For each $A^{(n)}_i$ we can find Borel set $C^{(n)}_i$ such that $\mu(C^{(n)}_i)=\alpha\mu(A^{(n)}_i).$ If we define set $C_n=\cup_{i\in \mathbb I_n} C^{(n)}_{i},$  calculation similar to (\ref{eq:Tm_epsilon_diff}) yields $\mu(C_n)=\epsilon$  $\mu(C_n \Delta A)\geq\epsilon$ for each $A \in \mathcal F_n.$  Which shows that $\mu$ is not uniformly approximable on $(F_n)$, so $(\widetilde{\mathcal E},d_{\mu})$ is not compact.

Let $\mu(E)=\infty.$
If $\mu(A_i^{(n)})=\infty$ for all $i \in I_n,$ we can take $C_n=\emptyset.$ Then $\mu(C_n)=0\leq \epsilon$ and $\mu(C_n \Delta A)=\infty\geq \epsilon.$

Otherwise, let $I'_n=\{ i \in I_n: \mu(A_i^{(n)})<\infty).$ 
Let $0<\epsilon<\frac{\mu(\cup_{i \in I'_n}A_i^{(n)})}{2}$ and $\alpha=\frac{\epsilon}{\mu(\cup_{i \in I'_n}A_i^{(n)})}.$
For each $A^{(n)}_i, i \in I'_n$  we take measurable set $C^{(n)}_i$ such that $\mu(C^{(n)}_i)=\alpha\mu(A^{(n)}_i).$ If we define set $C_n=\cup_{i\in \mathbb I'_n} C^{(n)}_{i},$
again we have
$\mu(C_n)=\epsilon$ and $\mu(C_n\Delta A)\geq \epsilon.$

Let $\mathcal E'_{\epsilon}=\{ A \in \mathcal E: \mu(A)\leq \epsilon.\}$
Previous discussion shows that $\mu$ is not uniformly approximable on $\mathcal{E}'_{\epsilon}\subset \mathcal E$ with respect to any filtration containing finite $\sigma$-algebras. We conclude that $(\widetilde{\mathcal E},d_{\mu})$ is not compact, and also each closed ball $\overline{B}([\emptyset],\sqrt{\epsilon})=\{ [B]: B \in \mathcal E'_{\epsilon}\}$ is not totally bounded, but it is also not compact since $\overline{B}([\emptyset],\sqrt{\epsilon})$ is closed. So, $[\emptyset]$ has no compact neighbourhood. We conclude that $(\widetilde{\mathcal E},d_{\mu})$ is not locally compact.   

\end{proof}

From Theorems \ref{tm:nonatomic_not_sep} and \ref{tm:atomless_not_compact} we see that  in case of non-atomic measure $\mu$, $(\widetilde{\mathcal E},d_{\mu})$ is not separable if $\mu$ is an infinite measure and also not compact when $\mu$ is an arbitrary measure.

Further on, let $\mu$ be a purely atomic measure on $(E,\mathcal E).$ Suppose that atoms of the $\mu$  are singletons. If $E$ is second countable Hausdorff and $\mathcal E$ is a Borel $\sigma$-algebra on $E,$ following Lemma \ref{atoms_singl}, every measure $\mu$ on $(E,\mathcal E)$ satisfies the condition.

We define set $E_{fin}=\{ x \in E: 0<\mu(\{x\})<\infty\}$ of all atoms with a finite measure and set $E_{\infty}=\{ x \in E: \mu(\{x\})=\infty\}$ of all atoms with an infinite measure.
To prove that  corresponding $(\widetilde{\mathcal E}_{fin},d_{\mu})$ is separable if and only if $E_{fin}$ is countable we  need the following result.

\begin{lm}
\label{lm:count_sep}
Let $(E,\mathcal E)$ be a measurable space.  Let $(\mathcal F_n)$ be a filtration on $\mathcal E$ such that $\mathcal F_n$ can be represented as $\sigma\left(A^{(n)}_1,\ldots,A^{(n)}_{m_n}\right)$ where $m_n\in \mathbb N$ and $A^{(n)}_1,\ldots,A^{(n)}_{m_n}$ is a finite partition of $E.$ If $E_{fin}$ is a uncountable subset of $E,$ there exists a decreasing sequence $\left(A^{(n)}_{j_n}\right)_n,$ $j_n \in \{ 1,\ldots,m_n\}$ such that $A^{(n)}_{j_n}\cap E_{fin}$ is infinite for each $n \in \mathbb N$ and $\bigcap\limits_{n \in \mathbb N}A^{(n)}_{j_n} \cap E_{fin}\neq \emptyset.$
\end{lm}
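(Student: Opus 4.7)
The plan is to construct the decreasing sequence as the trajectory of a single well-chosen point $x\in E_{fin}$, rather than trying to make a level-by-level pigeonhole choice of cells (which would give decreasing cells with uncountable $E_{fin}$-intersections but would not automatically guarantee a common point). The guiding observation is that the points of $E_{fin}$ that ever fall into a ``small'' cell form only a countable set, so an uncountable $E_{fin}$ must contain a point that avoids all such cells at every level.

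First I would record the refinement structure forced by $\mathcal{F}_n\subseteq\mathcal{F}_{n+1}$: every $A^{(n)}_i$ lies in $\mathcal{F}_{n+1}$ and is therefore a union of level-$(n{+}1)$ cells, so each $A^{(n+1)}_k$ sits in a unique $A^{(n)}_{i}$. Equivalently, for every $x\in E$ there is a unique cell $A^{(n)}_{i_n(x)}$ containing $x$, and the sequence $(A^{(n)}_{i_n(x)})_n$ is automatically decreasing in $n$. This step reduces the problem to producing a single $x$ with the right properties.

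Next, for each $n$ call a cell $A^{(n)}_i$ \emph{small} if $A^{(n)}_i\cap E_{fin}$ is finite and \emph{large} otherwise; let
\[
U_n\;=\;\bigcup\bigl\{A^{(n)}_i\colon A^{(n)}_i\text{ is small}\bigr\}.
\]
Since the level-$n$ partition has only $m_n<\infty$ members, $U_n\cap E_{fin}$ is a finite union of finite sets, hence finite, and therefore $\bigcup_{n\in\mathbb{N}}(U_n\cap E_{fin})$ is a countable union of finite sets, hence countable. Because $E_{fin}$ is uncountable, the set $E_{fin}\setminus\bigcup_n U_n$ is nonempty (in fact uncountable); pick any $x$ in it.

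Finally, define $j_n$ by $x\in A^{(n)}_{j_n}$. Step one gives $A^{(n+1)}_{j_{n+1}}\subseteq A^{(n)}_{j_n}$; the choice $x\notin U_n$ means $A^{(n)}_{j_n}$ is large, i.e.\ $A^{(n)}_{j_n}\cap E_{fin}$ is infinite; and trivially $x\in\bigcap_n A^{(n)}_{j_n}\cap E_{fin}$, so the intersection is nonempty. The only conceptually nontrivial point is resisting the temptation to build $(j_n)$ by a greedy cell-by-cell choice (which a priori need not produce a common point of intersection) and instead noticing that the countability bookkeeping of ``small'' cells lets a single pointwise choice do all the work simultaneously. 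Once this is spotted, the proof is purely routine.
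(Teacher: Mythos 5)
Your proof is correct. It rests on the same countability estimate as the paper's --- the points of $E_{fin}$ that at some level fall into a cell meeting $E_{fin}$ in only a finite set form a countable collection --- but you run the argument directly where the paper runs it by contradiction. The paper decomposes $E$ as $\bigcup_{(j_n)_n}\bigcap_{n}A^{(n)}_{j_n}$ over all chains of cells, assumes that every chain whose cells all meet $E_{fin}$ in an infinite set has empty intersection with $E_{fin}$, and then covers $E_{fin}$ by the countably many finite sets $\bigcap_{n=1}^{M}A^{(n)}_{j_n}\cap E_{fin}$ indexed by truncated chains, contradicting uncountability. Your version replaces this chain-indexed bookkeeping with the single set $\bigcup_n U_n$ of ``small'' cells and then follows the trajectory of one point chosen outside it; this is tidier, makes the refinement structure (each point determines a decreasing chain of cells) explicit where the paper leaves it implicit, and produces the nonempty intersection by exhibiting a concrete witness rather than deriving a contradiction. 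The two arguments are essentially contrapositives of one another; yours buys a cleaner and shorter write-up with no loss of generality.
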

\begin{proof}
It holds $E=\bigcap\limits_{n \in \mathbb N}E=\bigcap\limits_{n \in \mathbb N}\bigcup\limits_{j_n=1}^{m_n} A^{(n)}_{j_n}=\bigcup\limits_{(j_n)_n \in \prod_{n=1}^{\infty}\{1,\ldots,m_n\}}\bigcap\limits_{n \in N}A^{(n)}_{j_n}.$ It follows that $E_{fin}=E\cap E_{fin}=\bigcup\limits_{(j_n)_n \in \prod_{n=1}^{\infty}\{1,\ldots,m_n\}}\bigcap\limits_{n \in \mathbb N}A^{(n)}_{j_n}\cap E_{fin}.$
 Note that partition  \break $A^{(n+1)}_1,\ldots,A^{(n+1)}_{m_{n+1}}$ refines partition $A^{(n)}_1,\ldots,A^{(n)}_{m_n},$ so if $\bigcap\limits_{n \in N}A^{(n)}_{j_n}\neq \emptyset$ then $A^{(n+1)}_{j_{n+1}}\subseteq A^{(n)}_{j_n}.$

 Suppose conversely, that for each $\left(A^{(n)}_{j_n}\right)_n,$ $j_n \in \{ 1,\ldots,m_n\}$ such that $A^{(n)}_{j_n}\cap E_{fin}$ is infinite for each $n \in \mathbb N,$  $\bigcap\limits_{n \in \mathbb N}A^{(n)}_{j_n} \cap E_{fin}=\emptyset.$ Then 
 $E_{fin}=\bigcup\limits_{(j_n)_n \in J'}\bigcap\limits_{n \in \mathbb N}A^{(n)}_{j_n}\cap E_{fin},$ where $J'=\{ j=(j_n)_n \in \prod_{n=1}^{\infty}\{1,\ldots,m_n\}: \text{there exists \ } m_{j} \in \mathbb N \text{ \ such that \ }  |A^{(k)}_{j_k}\cap E_{fin}|<\infty, k\geq m_j\}.$ If we denote by $J_{fin}=\{(j_1,\ldots,j_M): j \in J', M\geq m_j \},$ in this case it holds $$E_{fin}\subseteq \bigcup\limits_{ j \in  J'}\bigcap\limits_{n=1}^{m_j}A^{(n)}_{j_n}\cap E_{fin} \subseteq \bigcup\limits_{ (j_1,\ldots,j_M) \in  J_{fin}}\bigcap\limits_{n=1}^{M}A^{(n)}_{j_n}\cap E_{fin}.$$ Since $J_{fin}$ is at most countable (as a subset of all finite sequences $(j_1,\ldots,j_M),$ $M\in \mathbb N,$ $j_n \in 1,\ldots,m_n$) and  $|\bigcap\limits_{n=1}^{M}A^{(n)}_{j_n}\cap E_{fin}|<\infty,$ it follows that $E_{fin}$ is at most countable, which is contradicts the assumption that $E_{fin}$ is uncountable.   
\end{proof}
\begin{tm}
\label{tm:sep_atomic}
Let $\mu$ be a purely atomic measure on $(E, \mathcal E)$ where all the atoms are singletons.
\begin{itemize}
    \item[(a)] $(\widetilde{\mathcal E},d_{\mu})$ is separable  if and only if $\sum\limits_{x \in E_{fin}}\mu(\{x\})<\infty$ and $E_{\infty}$ is finite.
    \item[(b)] $(\widetilde{\mathcal E}_{fin},d_{\mu})$ is separable  if and only if $E_{fin}$ is countable.
\end{itemize}
\end{tm}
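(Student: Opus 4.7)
My plan is to deduce both parts from the approximability characterization of separability (Theorem~\ref{tm:sep} and Corollary~\ref{co:un_approx_whole_E}(a)). Throughout I exploit that for a purely atomic $\mu$ with singleton atoms, $\mu(A) = \sum_{x \in A \cap E_{fin}}\mu(\{x\}) + \mu(A \cap E_\infty)$. For the sufficiency directions I build explicit finite filtrations. In (b), if $E_{fin} = \{x_1, x_2, \ldots\}$ is countable, I set $\mathcal F_n = \sigma(\{x_1\}, \ldots, \{x_n\})$ and approximate any $A \in \mathcal E_{fin}$ (so $A \cap E_\infty = \emptyset$) by $A' = A \cap \{x_1, \ldots, x_n\}$; the error $\sum_{i > n,\, x_i \in A}\mu(\{x_i\})$ is a tail of the convergent sum $\mu(A) < \infty$, hence vanishes. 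In (a), since $\sum_{x \in E_{fin}}\mu(\{x\}) < \infty$ already forces $E_{fin}$ countable, I enlarge the filtration to $\mathcal F_n = \sigma(\{x_1\}, \ldots, \{x_n\}, \{z_1\}, \ldots, \{z_m\})$ with the finite $E_\infty = \{z_1, \ldots, z_m\}$, and approximate arbitrary $A \in \mathcal E$ by $(A \cap \{x_1, \ldots, x_n\}) \cup (A \cap E_\infty)$; the error is again bounded by $\sum_{i > n}\mu(\{x_i\}) \to 0$.

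For the necessity in (b), I suppose $E_{fin}$ is uncountable. Writing $E_{fin} = \bigcup_{k \geq 1} E'_k$ with $E'_k = \{x \in E_{fin} : \mu(\{x\}) \geq 1/k\}$, some $E'_k$ is uncountable. Assume for contradiction that a finite filtration $(\mathcal F_n)$ approximates $\mu$ on $\mathcal E_{fin}$, and write $\mathcal F_n = \sigma(A^{(n)}_1, \ldots, A^{(n)}_{m_n})$ with its atoms forming a finite partition of $E$. I apply Lemma~\ref{lm:count_sep} with $E'_k$ in place of $E_{fin}$ — the proof of the lemma only uses uncountability of the input set — to produce a decreasing chain $A^{(n)}_{j_n}$ of atoms, each with $A^{(n)}_{j_n} \cap E'_k$ infinite, and a common point $x_0 \in \bigcap_n A^{(n)}_{j_n}$. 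Because $A^{(n)}_{j_n}$ contains infinitely many singletons of $\mu$-measure at least $1/k$, $\mu(A^{(n)}_{j_n}) = \infty$. For any $A' \in \mathcal F_n$: either $x_0 \in A'$, forcing $A' \supseteq A^{(n)}_{j_n}$ and $\mu(A' \Delta \{x_0\}) = \infty$; or $x_0 \notin A'$ and $\mu(A' \Delta \{x_0\}) \geq \mu(\{x_0\}) \geq 1/k$. Either way $\mu(A' \Delta \{x_0\}) \geq 1/k$, contradicting the supposed approximability of $\{x_0\} \in \mathcal E_{fin}$.

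For the necessity in (a), I treat the disjuncts of the failure of the stated conditions. If $E_\infty$ is infinite, I pick a countable sequence $(z_i) \subseteq E_\infty$ and set $B_S = \{z_i : i \in S\}$ for $S \subseteq \mathbb N$; since $\mu(B_S \Delta B_T) = \infty$ whenever $S \neq T$, no countable family of finite-radius balls can cover $\{[B_S] : S \subseteq \mathbb N\}$, so $\widetilde{\mathcal E}$ is not separable. Otherwise $E_\infty$ is finite and $\sum_{x \in E_{fin}}\mu(\{x\}) = \infty$. If $E_{fin}$ is uncountable, separability of $\widetilde{\mathcal E}$ would pass to its metric subspace $\widetilde{\mathcal E}_{fin}$, contradicting the necessity in (b) already proved. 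If $E_{fin} = \{x_1, x_2, \ldots\}$ is countable with divergent total mass, I partition $\mathbb N$ into consecutive blocks $I_k$ with $\sum_{i \in I_k}\mu(\{x_i\}) \geq 1$ and set $A_S = \bigcup_{k \in S}\{x_i : i \in I_k\}$ for $S \subseteq \mathbb N$; then $\mu(A_S \Delta A_T) \geq 1$ for distinct $S, T$, giving an uncountable family pairwise at $d_\mu$-distance $\geq 1$ and again precluding separability.

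The main obstacle is the necessity direction of (b): the key realization is that Lemma~\ref{lm:count_sep} must be applied to the uniform level set $E'_k$ rather than directly to $E_{fin}$. The lower bound $\mu(\{x\}) \geq 1/k$ is precisely what upgrades infinite cardinality of $A^{(n)}_{j_n} \cap E'_k$ into $\mu(A^{(n)}_{j_n}) = \infty$, which in turn produces the uniform lower bound on the approximation error for $\{x_0\}$. Without such uniformity, the infinitely many atoms inside $A^{(n)}_{j_n} \cap E_{fin}$ could have rapidly shrinking masses and the approximation error for $\{x_0\}$ could tend to $0$, collapsing the contradiction.
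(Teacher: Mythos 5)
Your proof is correct, and it follows the same overall strategy as the paper's: explicit finite filtrations built from the singleton atoms for the sufficiency directions, and Lemma~\ref{lm:count_sep} combined with the approximability criterion of Theorem~\ref{tm:sep} for necessity. However, your version is tighter than the paper's in two places. First, in the necessity direction of (b) the paper applies Lemma~\ref{lm:count_sep} directly to $E_{fin}$ and then asserts that $\sum_{y\in E_{fin}\cap A^{(n)}_{j_n}}\mu(\{y\})=\infty$ because it is ``an uncountable sum of non-negative numbers''; but the lemma as stated only guarantees that $A^{(n)}_{j_n}\cap E_{fin}$ is \emph{infinite}, and a countably infinite set of atoms can have summable masses, so the divergence does not follow as written. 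Your device of passing to an uncountable level set $E'_k=\{x\in E_{fin}:\mu(\{x\})\geq 1/k\}$ repairs exactly this point: infinitely many atoms of mass at least $1/k$ inside $A^{(n)}_{j_n}$ genuinely force $\mu(A^{(n)}_{j_n})=\infty$, and the same $1/k$ provides the uniform lower bound on the error in approximating $\{x_0\}$. Second, in the necessity direction of (a) the paper only argues that separability of $(\widetilde{\mathcal E},d_{\mu})$ forces $E_{fin}$ countable and $E_{\infty}$ finite; it never treats the case of a countable $E_{fin}$ with $\sum_{x\in E_{fin}}\mu(\{x\})=\infty$ (e.g.\ counting measure on $\mathbb N$), which is required for the stated equivalence. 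Your block construction of an uncountable family $\{A_S\}$ with pairwise distance at least $1$ closes that case, and your family $\{B_S\}$ of subsets of $E_{\infty}$, pairwise at infinite distance, is also cleaner than the paper's cardinality argument, which by itself does not exclude the possibility that $A'\setminus B$ is $\mu$-null for some $A'$ in the filtration. In short, the proposal proves the theorem as stated, while the paper's own proof leaves these two sub-cases imprecise or untreated.
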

\begin{proof}
Suppose  $E_{fin}$ is countable, so it can be written in a form $E_{fin}=\{x_n \in E, \ n \in \mathbb N\}.$ We  first prove that $(\widetilde{\mathcal E}_{fin},d_{\mu})$ is separable. Denote by $\mathcal E_{fin}=\{ B \in \mathcal E: \mu(B)<\infty\}.$ We set $\mathcal{F}_n=\sigma(\{x_1\},\ldots,\{x_n\}),$ $n \in \mathbb N.$ 

For $B \in \mathcal E_{fin}$ it holds $\mu(B)=\mu(B\cap E_{fin})=\sum\limits_{n=1}^{\infty}\mu(B\cap\{x_n\})=\sum_{i \in I_B}\mu(\{x_i\})<\infty,$ where $I_B=\{ n \in \mathbb N:x_n \in B\}.$
For arbitrary $\epsilon>0$ and arbitrary $B \in \mathcal E_{fin}$ there exists finite $I_{\epsilon}\subset I_B $ such that $\sum\limits_{i \in I_B\diagdown I_{\epsilon}}\mu(\{x_i\})=\mu(B \Delta \bigcup\limits_{i \in I_{\epsilon}}\{x_i\} )<\epsilon.$ If we choose $n_{\epsilon}=\max I_{\epsilon},$ $\bigcup\limits_{i \in I_{\epsilon}}\{x_i\} \in \mathcal F_{n_{\epsilon}}.$ We conclude that $\mu$ is approximable on $\mathcal E_{fin}$ with respect to $(\mathcal F_n),$ so $(\widetilde{\mathcal E}_{fin},d_{\mu})$ is separable.

Further on, suppose  $\sum\limits_{x \in E_{fin}}\mu(\{x\})<\infty$ and $E_{\infty}$ is finite, i.e. $E_{\infty}=\{y_1,\ldots,y_m\}$ for fixed $m \in N$ and let \break $\mathcal{F}^{\infty}_n=\sigma(\{y_1\},\ldots,\{y_m\},\{x_1\},\ldots,\{x_n\}),$ $n \in \mathbb N.$ 
For $B \in \mathcal E$ such that $\mu(B)=\infty$ it holds  $B\cap E_{\infty}\neq\emptyset$ and $\mu(B\diagdown E_{\infty})<\infty.$ For $B'=B\diagdown E_{\infty} \in \mathcal E_{fin}$ we have already proven that for $\epsilon>0$ there exists $n$ and $A'\in \mathcal F_n$ such that $\mu(B'\Delta A')<\epsilon.$ If we take $A''=(B\cap E_{\infty})\cup A' \in \mathcal F^{\infty}_n$ it holds $\mu(B\Delta A'') =\mu(B'\Delta A')<\epsilon,$ so $\mu$ is approximable on $\mathcal E$ with respect to $(\mathcal F_n^{\infty})$ and we conclude $(\widetilde{\mathcal E},d_{\mu})$ is separable.

To prove separability of $(\widetilde{\mathcal E}_{fin},d_{\mu})$  implies $E_{fin}$ is countable, we suppose conversely that $(\widetilde{\mathcal E}_{fin},d_{\mu})$  is separable and $E_{fin}$ is uncountable.
Since $(\widetilde{\mathcal E}_{fin},d_{\mu})$ is separable, there exists filtration $(\mathcal F_n)$ such that $\mathcal F_n$ is finite and $\mu$ is approximable on $\mathcal E$ with respect to $(\mathcal F_n).$ Each $\mathcal F_n$ can be represented as $\sigma\left(A^{(n)}_1,\ldots,A^{(n)}_{m_n}\right)$ where $m_n\in \mathbb N$ and $A^{(n)}_1,\ldots,A^{(n)}_{m_n}$ is a finite partition of $E.$

Since $E_{fin}\subseteq E$ is uncountable, following Lemma \ref{lm:count_sep} there exists $(j_n) \in \prod_{n=1}^{\infty}\{1,\ldots,m_n\}$ such that $E_{fin}\cap \left(\bigcap\limits_{n \in N}A^{(n)}_{j_n}\right)$ is uncountable. In other words, there exists a decreasing sequence $(A^{(n)}_{j_n})_n$ such that   $E_{fin}\cap A^{(n)}_{j_n}$ is infinite for each $n \in \mathbb N$ and the intersection $\cap_{n \in \mathbb N}A^{(n)}_{j_n}\cap E_{fin}$ is non-empty. We take $x \in \cap_{n \in \mathbb N}A^{(n)}_{j_n} \cap E_{fin},$ set $B=\{x\}$ and take $\epsilon$ such that $0<\epsilon\leq\mu(\{x\}).$ 
For arbitrary $n\in \mathbb N,$ if $A \in \mathcal F_n$ such that $A^{(n)}_{j_n}\subseteq A$ then $\mu(B\Delta A)\geq \mu(A^{(n)}_{j_n}\diagdown B)\geq\sum\limits_{y \in E_{fin}\cap A^{(n)_{j_n}}}\mu(\{y\})=\infty,$ since the sum is uncountable sum of non-negative numbers. If $A^{(n)}_{j_n}\cap A=\emptyset$ then
$\mu(B\Delta A)\geq \mu(\{x\})=\epsilon.$
So, for each $n \in \mathbb N$ and each $A \in \mathcal F_n$ it holds
\[
\mu(B\Delta A)\geq \epsilon.
\]
This is contradiction to a fact that $\mu$ is  approximable on $\mathcal E_{fin}$ with respect to $(\mathcal F_n)$ and therefore $(\widetilde{\mathcal E}_{fin},d_{\mu})$ is not separable.  

Let us now prove that if  $(\widetilde{\mathcal E},d_{\mu})$ is separable then $E_{fin}$ is countable and $E_{\infty}$ is finite.
We suppose, conversely,  $(\widetilde{\mathcal E},d_{\mu})$ is separable and $E_{fin}$ is uncountable or $E_{\infty}$ is infinite.
If $E_{fin}$ is uncountable then $(\widetilde{\mathcal E}_{fin},d_{\mu})$ is  not separable. Since $(\widetilde{\mathcal E}_{fin},d_{\mu})$ is a subspace of $(\widetilde{\mathcal E},d_{\mu}),$ using Theorem \cite[Theorem VIII, p.~160 ]{Zo} we can conclude $(\widetilde{\mathcal E},d_{\mu})$ is not separable.

If $E_{\infty}$ is infinite, its partitive set is also infinite. So,  for an arbitrary filtration $(\mathcal F_n)$ on $\mathcal E$ with $|\mathcal F_n|<\infty$ we can find $B\subset E_{\infty}$ such that $B \notin \mathcal F_n,$ $n \in \mathbb N.$ It holds $\mu(B\Delta A')=\infty$ for all $A' \in \mathcal{F}_n,$ $n \in \mathbb N.$ So, $\mu$ is not approximable on $\mathcal E$ with respect to any $(\mathcal F_n)$ where $|\mathcal F_n|<\infty$ and therefore $(\widetilde{\mathcal E},d_{\mu})$ is not separable. 
\end{proof}
\begin{tm}
\label{tm:purely_atomic_comp}
Let $\mu$ be purely atomic measure on $(E, \mathcal E)$ where all the atoms are singletons.
\begin{itemize}
    \item[(a)] $(\widetilde{\mathcal E},d_{\mu})$ is compact if and only if $\sum\limits_{x \in E_{fin}}\mu(\{x\})$ is finite and $E_{\infty}$ is finite. 
    \item[(b)] $(\widetilde{\mathcal E}_{fin},d_{\mu})$ is compact if and only if $\sum\limits_{x \in E_{fin}}\mu(\{x\})$ is finite.
\end{itemize}
 
\end{tm}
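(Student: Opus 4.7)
The plan is to reduce compactness to uniform approximability using the machinery already in place. Completeness of both $(\widetilde{\mathcal E},d_\mu)$ and $(\widetilde{\mathcal E}_{fin},d_\mu)$ is established in Theorem \ref{tm:compl} and the remark that follows it, so by Theorem \ref{tm:comp_closed_totally_bounded} each claim reduces to total boundedness, which by Theorem \ref{tm:comp} is in turn equivalent to the existence of a filtration of finite $\sigma$-algebras on which $\mu$ is uniformly approximable on the relevant family. Both parts therefore split into constructing such a filtration under the stated hypotheses and ruling one out otherwise.

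For sufficiency in both (a) and (b), I would first observe that $\sum_{x\in E_{fin}}\mu(\{x\})<\infty$ forces $E_{fin}$ to be countable, so I can enumerate $E_{fin}=\{x_n:n\in\mathbb N\}$. In (a) I would take $\mathcal F_n=\sigma(\{y_1\},\dots,\{y_m\},\{x_1\},\dots,\{x_n\})$, where $E_\infty=\{y_1,\dots,y_m\}$; in (b) I would take $\mathcal F_n=\sigma(\{x_1\},\dots,\{x_n\})$. These are finite, increasing $\sigma$-algebras. Given $\epsilon>0$, I would choose $N$ with $\sum_{k>N}\mu(\{x_k\})<\epsilon$ and, for any $B$ in the relevant family, approximate it by $A=(B\cap E_\infty)\cup(B\cap\{x_1,\dots,x_N\})$ in (a), respectively $A=B\cap\{x_1,\dots,x_N\}$ in (b). Since $\mu$ is purely atomic with singleton atoms, the symmetric difference $B\Delta A$ carries mass only on the tail $\{x_k:k>N,\,x_k\in B\}$, giving $\mu(B\Delta A)<\epsilon$; uniform approximability and hence compactness follow.

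For necessity in (a) I would simply remark that a compact metric space is separable, so Theorem \ref{tm:sep_atomic}(a) immediately yields both required conditions. For necessity in (b), Theorem \ref{tm:sep_atomic}(b) already gives $E_{fin}$ countable, so what remains is to exclude a divergent series $\sum_{x\in E_{fin}}\mu(\{x\})=\infty$. I would consider the increasing sequence $B_n=\{x_1,\dots,x_n\}\in\mathcal E_{fin}$: divergence would force $\mu(B_n)\to\infty$, while any proposed finite $\epsilon$-net $\{[A_1],\dots,[A_k]\}$ in $\widetilde{\mathcal E}_{fin}$ has $M=\max_i\mu(A_i)<\infty$. Choosing $n$ with $\mu(B_n)>M+\epsilon^2$, the bound $\mu(B_n\Delta A_i)\geq\mu(B_n)-\mu(A_i)>\epsilon^2$ applies to every $i$, contradicting total boundedness and hence compactness.

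The step I expect to be the main obstacle is the bookkeeping in the sufficiency argument, specifically verifying that pure atomicity (as the excerpt formulates it, allowing the infinite singletons of $E_\infty$) really does confine $\mu(B\Delta A)$ to the missing finite-mass singletons and leaves no residual contribution from the non-atomic part of $E$. Once this is pinned down, the remainder is a routine combination of the equivalences already proved in Theorems \ref{tm:comp_closed_totally_bounded}, \ref{tm:compl}, \ref{tm:comp}, and \ref{tm:sep_atomic}.
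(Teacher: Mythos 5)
Your proposal is correct, and the sufficiency half is essentially the paper's argument verbatim: enumerate $E_{fin}$, take $\mathcal F_n=\sigma(\{x_1\},\dots,\{x_n\})$ (augmented by the finitely many singletons of $E_\infty$ in part (a)), and use the tail of the convergent series to get uniform approximability, hence total boundedness, hence compactness via Theorems \ref{tm:compl}, \ref{tm:comp_closed_totally_bounded} and \ref{tm:comp}. Where you genuinely diverge is in the necessity direction. For (a) you invoke the fact that a compact metric space is separable and read off both conditions from Theorem \ref{tm:sep_atomic}(a); the paper instead argues in two cases, showing $(\widetilde{\mathcal E}_{fin},d_\mu)$ is a closed (hence compact) subspace to import part (b), and separately using non-separability when $E_\infty$ is infinite. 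Your route is shorter and equally valid. For the necessity in (b) the paper again works through the uniform-approximability characterisation: it fixes a filtration, finds a partition cell $A^{(n)}_{j_n}$ meeting $E_{fin}$ in a set of infinite total mass, and builds a finite subset of that cell which no element of $\mathcal F_n$ approximates within $\epsilon$. You replace this with a plain unboundedness argument: the sets $B_n=\{x_1,\dots,x_n\}$ have $\mu(B_n)\to\infty$, while every ball of a putative finite $\epsilon$-net is centred at some $[A_i]$ of finite measure, and $\mu(B_n\Delta A_i)\ge\mu(B_n)-\mu(A_i)$ eventually exceeds $\epsilon^2$. This is more elementary and avoids the partition bookkeeping entirely; the paper's version has the advantage of staying uniformly within the approximability framework it is advertising, but yours buys a cleaner and shorter contradiction. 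The one point you flag as a worry --- that pure atomicity with singleton atoms confines all mass to $E_{fin}\cup E_\infty$ so that $\mu(B\Delta A)$ is exactly the tail sum --- is used silently by the paper as well, so you are not missing anything it supplies.
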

\begin{proof}

First, let us prove that $\sum\limits_{x \in E_{fin}}\mu(\{x\})<\infty$ implies that $(\widetilde{\mathcal E}_{fin},d_{\mu})$ is compact.
 Since $\sum\limits_{x \in E_{fin}}\mu(\{x\})$ implies that $E_{fin}$ is countable (see e.g. \cite[Theorem 3.12.6., p. 131]{Va}).  We can assume that $E_{fin}=\{ x_n, n\in \mathbb N\}.$ We denote $\mathcal E_{fin}=\{ B \in \mathcal E: \mu(B)<\infty\}.$ It holds
      $$\sum\limits_{n=1}^{\infty}\mu(\{x_n\})<\infty.$$
      So, for every $\epsilon>0$ there exists $n_\epsilon\in \mathbb N$ such that $\sum\limits_{n=n_{\epsilon}}^{\infty}\mu(\{x_n\})<\epsilon.$
      Define $\mathcal F_n=\sigma\left(\{ x_1\},\ldots,\{x_n\}\right).$ Let us show that $\mu$ is uniformly approximable  on $\mathcal E_{fin}$ with respect to $\mathcal{F}_n.$
      If we take arbitrary $B \in \mathcal E_{fin}$ and set $A=B\cap \{x_1,\ldots,x_{n_{\epsilon}}\} \in \mathcal F_{n_{\epsilon}}$ it holds that
      $$\mu(A\Delta B)\leq \sum\limits_{n=n_{\epsilon}}^{\infty}\mu(\{x_n\})<\epsilon,$$
      so following Theorem \ref{tm:comp}, $\widetilde{\mathcal{E}}_{fin}=\{ [B]: B\in \mathcal E_{fin}\}$ is compact.

      Suppose $E_{\infty}=\{y_1,\ldots,y_m\},$ for fixed $m \in \mathbb N.$ To prove $(\widetilde{\mathcal{E}},d_{\mu})$ is compact, we set $\mathcal{F}^{\infty}_n=\sigma(\{y_1\},\ldots,\{y_m\},\{x_1\},\ldots,\{x_n\}),$ $n \in \mathbb N.$
For $B \in \mathcal E$ such that $\mu(B)=\infty$ it holds  $B\cap E_{\infty}\neq\emptyset$ and $\mu(B\diagdown E_{\infty})<\infty.$ We have shown for $\epsilon>0$ there exists $n$ such that for every $B'\in \mathcal E_{fin}$ there exists $A'\in \mathcal F_n$ such that $\mu(B'\Delta A')<\epsilon.$ If we take $A''=(B\cap E_{\infty})\cup A' \in \mathcal F^{\infty}_n$ it holds $\mu(B\Delta A'') =\mu(B'\Delta A')<\epsilon,$ so $\mu$ is  uniformly approximable on $\mathcal E$ with respect to $(\mathcal F_n^{\infty})$ and we conclude $(\widetilde{\mathcal E},d_{\mu})$ is compact.
      
      Let us prove that if $(\widetilde{\mathcal E}_{fin},d_{\mu})$ is compact then $\sum\limits_{x \in E_{fin}}\mu(\{x\})<\infty.$
      Suppose conversely, that $(\widetilde{\mathcal E}_{fin},d_{\mu})$ is compact and $\sum\limits_{x \in E_{fin}}\mu(\{x\})=\infty.$
      If $E_{fin}$ is uncountable, then following Theorem \ref{tm:sep_atomic} $(\widetilde{\mathcal E}_{fin},d_{\mu})$ is not separable, so it cannot be compact.
      Suppose now that $(\widetilde{\mathcal E}_{fin},d_{\mu})$ is compact and $E_{fin}=\{x_n, n\in \mathbb N$ is countable and $\sum\limits_{x\in E_{fin}}\mu(\{x\})=\sum\limits_{n \in \mathbb N}\mu(\{x_n\})=\infty.$   Since $(\widetilde{\mathcal E}_{fin},d_{\mu})$ is compact there exists a filtration $(\mathcal F_n)$ such that $\mathcal F_n$ is finite for each $n \in \mathbb N $ and $\mu$ is uniformly approximable on $\mathcal E_{fin}=\{ B\in \mathcal E: \mu(B)<\infty\}$ with respect to $(\mathcal F_n).$
      For each $\mathcal F_n$ it holds that $\mathcal F_n=\sigma\left(A^{(n)}_1,\ldots,A^{(n)}_{m_n}\right)$ where $m_n\in \mathbb N$ and $A^{(n)}_1,\ldots,A^{(n)}_{m_n}$ is a finite partition of $E.$
      We take an arbitrary $\epsilon>0.$ For each $n$ there exists $A^{(n)}_{j_n}$ such that $A^{(n)}_{j_n}\cap E_{fin}$ is infinite, $A^{(n)}_{j_n}\cap E_{fin}=\{ x'_n:n \in \mathbb N\}$ and $\sum\limits_{n \in \mathbb N}\mu(\{x'_n\})=\infty.$  We an find $m \in \mathbb N$ such that $\sum\limits_{n=1}^m\mu(\{ x'_n\})\geq \epsilon.$ If we take $A=\{x'_1,\ldots,x'_m\} \subseteq A^{(n)}_{j_n}\cap E_{fin},$  then it is easy to see that 
      $\mu(A\Delta A')\geq \epsilon$ for each $A' \in \mathcal F_n$ since if $A'\cap A^{(n)}_{j_n}=\emptyset$ 
      $\mu(A\Delta A')\geq \mu(A)\geq \epsilon$ and if $ A^{(n)}_{j_n}\subseteq A',$ $\mu(A\Delta A')\geq \mu(A^{(n)}_{j_n}\diagdown A)=\sum\limits_{n=m+1}^{\infty}\mu(\{x'_n\})=\infty\geq \epsilon.$
      So, $(\widetilde{\mathcal E}_{fin},d_{\mu})$ cannot be uniformly approximable on $\mathcal E_{fin}$ with respect to $(\mathcal F_n)$ and $(\widetilde{\mathcal E}_{fin},d_{\mu})$ is not compact. 
      
      Let us now prove that if  $(\widetilde{\mathcal E},d_{\mu})$ is compact then $\sum\limits_{x \in E_{fin}}\mu(\{x\})<\infty$ and $E_{\infty}$ is finite.
We suppose, conversely,  $(\widetilde{\mathcal E},d_{\mu})$ is compact and $\sum\limits_{x \in E_{fin}}\mu(\{x\})=\infty$ or $E_{\infty}$ is infinite.
If $\sum\limits_{x \in E_{fin}}\mu(\{x\})=\infty$ then $(\widetilde{\mathcal E}_{fin},d_{\mu})$ is  not compact. Since $(\widetilde{\mathcal E}_{fin},d_{\mu})$ is a closed subspace of $(\widetilde{\mathcal E},d_{\mu}),$ using Theorem \cite[Theorem VIII, p.~160 ]{Zo} we can conclude $(\widetilde{\mathcal E},d_{\mu})$ is not compact.

      If $E_{\infty}$ is infinite, $(\widetilde{\mathcal E},d_{\mu})$ is not separable, so it cannot be compact.

\end{proof}
We conclude the main part of the paper with a few examples. 
\begin{ex}
Let $\mu$ be a counting measure on $(\mathbb N,2^{\mathbb N}),$ i.e. $\mu(A)=|A|$ where $|A|$ stands for the cardinal number of $A$ if $A$ is finite and $\infty$ otherwise. Since $E_{fin}$ is countable,  $(\widetilde{\mathcal E}_{fin},d_{\mu})$ is separable, but it is not compact since $\sum\limits_{x \in E_{fin}}\mu(\{x\})=\infty.$  Also $(\widetilde{\mathcal E},d_{\mu})$ is not  separable.
For arbitrary finite $B$ and arbitrary $0<\epsilon<1,$  $B([B],\epsilon)=\overline{B}([B],\epsilon)=\{[B]\}.$ So, the closed ball with radius less than 1 around each element of $(\widetilde{\mathcal E}_{fin},d_{\mu})$ and $(\widetilde{\mathcal E},d_{\mu})$ is compact since it is finite.  Therefore, they are both locally compact.
\end{ex}
\begin{ex}
Let $\mu$ be a counting measure on $(\mathbb R^d,2^{\mathbb R^d}),$ $\mu(A)=|A|$ where $|A|$ stands for the cardinal number of $A$ if $A$ is finite and $\infty$ otherwise. Since $E_{fin}$ is uncountable, corresponding  $(\widetilde{\mathcal E}_{fin},d_{\mu})$ is not separable and not compact. However, $(\widetilde{\mathcal E},d_{\mu})$ and $(\widetilde{\mathcal E}_{fin},d_{\mu})$ are locally compact.
Similarly to previous example, for arbitrary finite $B$ and arbitrary $0<\epsilon<1,$  $B([B],\epsilon)=\overline{B}([B],\epsilon)=\{[B]\}.$ So, the closed ball with radius less than 1 around each element of $(\widetilde{\mathcal E},d_{\mu})$ is compact since it is finite. 

\end{ex}
\begin{ex}
Let $\mu$ be a counting measure and $\lambda$ a Lebesgue measure on $(\mathbb R^d,\mathfrak{B}(\mathbb R^d)).$ Let $\nu(A)=\mu(A\cap B(0,1))+\lambda(A), A \in \mathfrak{B}(\mathbb R^d),$ where  $B(0,1)$ stands for a unit ball with centre at the origin.  It is easy to see $(\widetilde{\mathcal E}_{fin},d_{\mu})$ and $(\widetilde{\mathcal E},d_{\mu})$ are not separable   not compact (since for $\nu$, $E_{fin}$ is uncountable) and not locally compact. 
\end{ex}
\section{Discussion}
Using newly defined terms of approximability and uniformly approximability, the conditions for separability and compactness of $(\widetilde{\mathcal{E}},d_{\mu})$  and  $(\widetilde{\mathcal{E}}_{fin},d_{\mu})$ can be summarised in a Table \ref{tab:sum}. Table \ref{tab:sum2} provides the topological properties of $(\widetilde{\mathcal{E}},d_{\mu})$  and  $(\widetilde{\mathcal{E}}_{fin},d_{\mu})$ based on finiteness and atomicity properties of the corresponding measure space  $(E,\mathcal E,\mu).$
\begin{table}[H]
    \centering
    \begin{tabular}{|c||c|c|}
    \hline 
        & $(\widetilde{\mathcal{E}},d_{\mu})$  &  $(\widetilde{\mathcal{E}}_{fin},d_{\mu})$ \\ \hline \hline
       Separable & \begin{minipage}{4.5 cm} for $\mathcal E$ countably generated if and only if $\mu$ is finite (Theorem \ref{tm:sep_2} and Theorem \ref{tm:atomless_not_compact}). \end{minipage} & \begin{minipage}{4.5 cm}
       \vspace{0.1 cm}
      
           $\bullet$ for $\widetilde{\mathcal E}_{fin}=\widetilde{\mathfrak B (\mathbb R^d)}_{fin},$ if $\mu$ is outer regular (Theorem \ref{tm:sep_1}),\\
     $\bullet$ for $\mu$ purely atomic measure where all the atoms are singletons, if and only if  the set of atoms with finite measure $E_{fin}$ is countable (Theorem \ref{tm:sep_atomic}).

               \vspace{0.1 cm}\end{minipage}\\ \hline
       Compact  & \begin{minipage}{4.5 cm} if and only if $\mu$ is purely atomic with all atoms singletons and $\sum\limits_{x \in E_{fin}}\mu(\{x\})<\infty$ and and $E_{\infty}$ is finite (Theorem \ref{tm:atomless_not_compact} and Theorem \ref{tm:sep_atomic} (a)).\end{minipage} & \begin{minipage}{4.5 cm}
       \vspace{0.1 cm} for $\mu$ is purely atomic measure where all atoms are singletons if and only if  $\sum\limits_{x \in E_{fin}}\mu(\{x\})<\infty$ (Theorem \ref{tm:purely_atomic_comp}).
       \vspace{0.1 cm}\end{minipage}  \\ \hline

    \end{tabular}
    \caption{Condition on $(E,\mathcal E,\mu)$ for separability and compactness of $(\widetilde{\mathcal{E}},d_{\mu})$ and $(\widetilde{\mathcal{E}}_{fin},d_{\mu})$}
    \label{tab:sum}
\end{table}

\begin{table}[H]

    \centering
    \begin{tabular}{|c||c|c|}
    \hline
    & $\mu$ non-atomic & $\mu$ purely atomic \\ \hline \hline
$\mu$ finite         & \begin{minipage}{4.5 cm} $(\widetilde{\mathcal{E}},\widetilde{\mu})=(\widetilde{\mathcal{E}}_{fin},\widetilde{\mu})$ is:\\
$\bullet$ not compact (Theorem \ref{tm:atomless_not_compact}),\\
$\bullet$ is separable if $\mathcal E$ is countably generated (Theorem \ref{tm:sep_2}). \end{minipage} & \begin{minipage}{4.5 cm} $(\widetilde{\mathcal{E}},\widetilde{\mu})=(\widetilde{\mathcal{E}}_{fin},\widetilde{\mu})$ is:\\
$\bullet$ separable and compact (Theorem \ref{tm:sep_atomic} and Theorem \ref{tm:purely_atomic_comp}).\end{minipage} \\ \hline
    $\mu$ infinite  & \begin{minipage}{4.5 cm} $(\widetilde{\mathcal{E}},\widetilde{\mu})$ is:\\
    $\bullet$ not separable (Theorem \ref{tm:nonatomic_not_sep}),\\
    $\bullet$ not compact (Theorem \ref{tm:atomless_not_compact}).\\
    $(\widetilde{\mathcal{E}}_{fin},\widetilde{\mu})$ is:\\
     $\bullet$ is separable if $\mu$ is outer regular and $(E,\mathcal E)=(\mathbb R^d,\mathfrak{B}(\mathbb R^d).$\\

    \end{minipage}& \begin{minipage}{4.5 cm}
    $(\widetilde{\mathcal{E}},\widetilde{\mu})$ is:\\
    $\bullet$ separable and compact if and only if $\sum\limits_{x \in E_{fin}}\mu(\{x\})$ is finite and $E_{\infty}$ is finite ((Theorem \ref{tm:sep_atomic} (a) and Theorem \ref{tm:purely_atomic_comp} (a)).\\
    $(\widetilde{\mathcal{E}}_{fin},\widetilde{\mu})$ is:\\
    $\bullet$ separable if and only if $E_{fin}$ is countable (Theorem \ref{tm:sep_atomic} (b)),\\
    $\bullet$ compact if and only if $\sum\limits_{x \in E_{fin}}\mu(\{x\})$ is finite (Theorem \ref{tm:purely_atomic_comp} (b)).
    \end{minipage} 
    \\ \hline
    \end{tabular}
    \caption{Separability and compactness of $(\widetilde{\mathcal E},d_{\mu})$ and $(\widetilde{\mathcal E}_{fin},d_{\mu})$ depending whether $\mu$ is finite or infinite, purely atomic or non-atomic. }
    \label{tab:sum2}
\end{table}

\section*{ACKNOWLEDGEMENT}
\small
The author is grateful to  Lev B. Klebanov and Vlasta Matijević for the inspiring discussions and useful remarks and Jakub Stan\v{e}k  for careful reading of the manuscript, helpful comments ad suggestions.\\
Supported in a part by the Czech Science Foundation, project No.~19-04412S, and by the Grant Agency of the Czech Technical University in Prague, project No.~SGS21/056/OHK3/1T/13.



\end{document}